\begin{document}

\newtheorem{theorem}{Theorem}
\newtheorem{lemma}[theorem]{Lemma}
\newtheorem{example}[theorem]{Example}
\newtheorem{algol}{Algorithm}
\newtheorem{corollary}[theorem]{Corollary}
\newtheorem{prop}[theorem]{Proposition}
\newtheorem{definition}[theorem]{Definition}
\newtheorem{question}[theorem]{Question}
\newtheorem{problem}[theorem]{Problem}
\newtheorem{remark}[theorem]{Remark}
\newtheorem{conjecture}[theorem]{Conjecture}

\newcommand{\comm}[1]{\marginpar{%
\vskip-\baselineskip 
\raggedright\footnotesize
\itshape\hrule\smallskip#1\par\smallskip\hrule}}

\newcommand{\commI}[1]{\marginpar{%
\begin{color}{blue}
\vskip-\baselineskip 
\raggedright\footnotesize
\itshape\hrule \smallskip I: #1\par\smallskip\hrule\end{color}}}


\def\cA{{\mathcal A}}
\def\cB{{\mathcal B}}
\def\cC{{\mathcal C}}
\def\cD{{\mathcal D}}
\def\cE{{\mathcal E}}
\def\cF{{\mathcal F}}
\def\cG{{\mathcal G}}
\def\cH{{\mathcal H}}
\def\cI{{\mathcal I}}
\def\cJ{{\mathcal J}}
\def\cK{{\mathcal K}}
\def\cL{{\mathcal L}}
\def\cM{{\mathcal M}}
\def\cN{{\mathcal N}}
\def\cO{{\mathcal O}}
\def\cP{{\mathcal P}}
\def\cQ{{\mathcal Q}}
\def\cR{{\mathcal R}}
\def\cS{{\mathcal S}}
\def\cT{{\mathcal T}}
\def\cU{{\mathcal U}}
\def\cV{{\mathcal V}}
\def\cW{{\mathcal W}}
\def\cX{{\mathcal X}}
\def\cY{{\mathcal Y}}
\def\cZ{{\mathcal Z}}

\def\C{\mathbb{C}}
\def\F{\mathbb{F}}
\def\K{\mathbb{K}}
\def\G{\mathbb{G}}
\def\Z{\mathbb{Z}}
\def\R{\mathbb{R}}
\def\Q{\mathbb{Q}}
\def\N{\mathbb{N}}
\def\M{\textsf{M}}
\def\PP{\mathbb{P}}
\def\A{\mathbb{A}}
\def\p{\mathfrak{p}}
\def\n{\mathfrak{n}}
\def\X{\mathcal{X}}
\def\x{\textrm{\bf x}}
\def\w{\textrm{\bf w}}
\def\ovQ{\overline{\Q}}
\def\rank#1{\mathrm{rank}#1}
\def\wf{\widetilde{f}}
\def\wg{\widetilde{g}}

\def\({\left(}
\def\){\right)}
\def\[{\left[}
\def\]{\right]}
\def\<{\langle}
\def\>{\rangle}

\def\gen#1{{\left\langle#1\right\rangle}}
\def\genp#1{{\left\langle#1\right\rangle}_p}
\def\genPs{{\left\langle P_1, \ldots, P_s\right\rangle}}
\def\genPsp{{\left\langle P_1, \ldots, P_s\right\rangle}_p}

\def\Mult{M} 

\def\e{e}

\def\eq{\e_q}
\def\fh{{\mathfrak h}}

\def\lcm{{\mathrm{lcm}}\,}

\def\({\left(}
\def\){\right)}
\def\fl#1{\left\lfloor#1\right\rfloor}
\def\rf#1{\left\lceil#1\right\rceil}
\def\mand{\qquad\mbox{and}\qquad}

\def\jt{\tilde\jmath}
\def\ellmax{\ell_{\rm max}}
\def\llog{\log\log}

\def\m{{\rm m}}
\def\ch{\hat{h}}
\def\GL{{\rm GL}}
\def\Orb{\mathrm{Orb}}
\def \S{\mathcal{S}}
\def\vec#1{\mathbf{#1}}
\def\ov#1{{\overline{#1}}}
\def\Gal{{\rm Gal}}

\newcommand{\bfalpha}{{\boldsymbol{\alpha}}}
\newcommand{\bfomega}{{\boldsymbol{\omega}}}

\newcommand{\Ch}{{\operatorname{Ch}}}
\newcommand{\Elim}{{\operatorname{Elim}}}
\newcommand{\proj}{{\operatorname{proj}}}
\newcommand{\h}{{\operatorname{h}}}

\newcommand{\hh}{\mathrm{h}}
\newcommand{\aff}{\mathrm{aff}}
\newcommand{\Spec}{{\operatorname{Spec}}}
\newcommand{\Res}{{\operatorname{Res}}}

\numberwithin{equation}{section}
\numberwithin{theorem}{section}

\title{On some extensions of the Ailon-Rudnick Theorem}

\author{Alina Ostafe}
\address{School of Mathematics and Statistics, University of New South Wales, Sydney NSW 2052, Australia}
\email{alina.ostafe@unsw.edu.au}

\subjclass[2010]{11R58, 11D61}
\keywords{greatest common divisor, polynomials}

\begin{abstract} 
In this paper we present some extensions of the Ailon-Rudnick Theorem, which says that if $f,g\in\C[T]$, then $\gcd(f^n-1,g^m-1)$ is bounded for all $n,m\ge 1$. More precisely, using a uniform bound for the number of torsion points on curves and results on the intersection of curves with algebraic subgroups of codimension at least $2$, we present two such extensions in the univariate case. We also give two multivariate analogues of the Ailon-Rudnick Theorem based on Hilbert's irreducibility theorem and a result of Granville and Rudnick about torsion points on hypersurfaces.
\end{abstract}

\maketitle

\section{Introduction}

\subsection{Motivation}

Let $a,b$ be multiplicatively independent positive integers and $\varepsilon>0$. 
Bugeaud, Corvaja and Zannier~\cite{BCZ} have proved that
$$
\gcd\(a^n-1,b^n-1\)\le \exp(\varepsilon n)
$$
as $n$ tends to infinity. 
 Corvaja and Zannier~\cite{CZ05} have generalised  this result
 and replaced $a^n,b^n$ with 
 multiplicatively independent  $S$-units $u,v\in\Z$.

In the function field case, Ailon and Rudnick~\cite[Theorem~1]{AR} proved that if $f,g\in\C[T]$ are multiplicatively independent polynomials, then there exists $h \in \C[T]$ such 
that
\begin{equation}
\label{eq:AR}
\gcd(f^n-1,g^n-1) \mid h
\end{equation}
for all $n\ge 1$. Examining their argument one can easily see that the same
statement holds in a larger generality; namely there exists $\widetilde h \in \C[T]$
 such 
that
\begin{equation}
\label{eq:AR Gen}
\gcd(f^n-1,g^m-1) \mid \widetilde h
\end{equation}
for all $n,m\ge 1$.

In the case of finite fields $\F_q$ of characteristic $p$, Silverman~\cite{Si} proves that even more restrictions on the polynomials $f,g\in\F_q[T]$ does not allow a similar conclusion as the result of~\cite{AR}. In particular, Silverman proves that
the analogue of~\eqref{eq:AR} is false in a very strong sense:  there exists a constant $c(f,g;q)$, depending only on $f$,  $g$ and $q$, such that
$$
\deg \gcd(f^n-1,g^n-1)\ge c(f,g;q) n
$$
for infinitely many $n$.

More results in positive characteristic are obtained in~\cite{CZ13,Lau}, as well as variants for elliptic divisibility sequences~\cite{Si04,Si05}.

In this paper we present some extensions of the Ailon-Rudnick Theorem~\cite[Theorem 1]{AR} over $\C$, both in the univariate and multivariate cases.  
Although the method of proof in the univariate case is similar to, or reduces to using,~\cite{AR}, we find these extensions exciting and we hope they will be of independent interest. Moreover, as we explain below, in certain situation we reduce our problem to applying~\cite[Theorem 1]{AR}, however for this we need a uniform bound for~\eqref{eq:AR} that depends only on the degree of the polynomials $f$ and $g$. 

Besides the generality of results, the new ingredients of the paper are employing  results~\cite{BS,BMZ99,BMZ08,Mau}  on the number of points on intersections of 
curves in the $n$-dimensional  multiplicative torus $\G_m^n$ with algebraic subgroups. 
We also present two multivariate generalisations that are based on the use of  Hilbert's irreducibility theorem~\cite{Sch} and a transformation using the Kronecker substitution  to reduce the problem to the univariate case, as well as a result of Granville and Rudnick~\cite{GranRud} about torsion points on hypersurfaces.

\subsection{Conventions and notation} As usual, we denote $\C[X_1,\ldots,X_{\ell}]$ the polynomial ring in ${\ell}$ variables and $\C(X_1,\ldots,X_{\ell})$ the field of rational functions $F/G$, $F,G\in\C[X_1,\ldots,X_{\ell}]$. When working with univariate polynomials we reserve the variable $T$. All polynomials in $\C[T]$ are denoted with small letters $f,g,\ldots$, and for polynomials in $\C[X_1,\ldots,X_\ell]$ we use capital letters $F,G,\ldots$. 

Throughout the paper, for a univariate polynomial $f\in\C[T]$, the notation $d_f$ will be used for the degree of $f$. 

For a 
family of polynomials $F_1,\ldots,F_s \in \C[X_1, \ldots, X_{\ell}]$, we
denote by $Z(F_1,\ldots,F_s)$ their zero set in $\C^{m}$.

Throughout the paper we assume that the greatest common divisor of two (or more) 
polynomials is monic, so it  is well-defined.

We also define here the main concept of this paper.

\begin{definition}
The polynomials $F_1,\ldots,F_{s}\in\C[X_1,\ldots,X_{\ell}]$ are {\it multiplicatively independent} if there exists no nonzero vector $(\nu_1,\ldots,\nu_{s})\in\Z^{s}$ such that
$$
F_1^{\nu_1}\cdots F_s^{\nu_{s}}=1.
$$
Similarly, we say that the polynomials $F_1,\ldots,F_{s}\in\C[X_1,\ldots,X_{\ell}]$ are {\it multiplicatively independent in the group $\C(X_1,\ldots,X_{\ell})^*/\C^*$} if there exists no nonzero vector $(\nu_1,\ldots,\nu_{s})\in\Z^{s}$ and $a\in\C^*$ such that
$$
F_1^{\nu_1}\cdots F_s^{\nu_{s}}=a.
$$
\end{definition}

We present now in more details the main results of this paper.

\subsection{Our results: univariate case} 

Section~\ref{sec:prel} is dedicated to outlining the tools and results needed along the paper.
In particular, in Section~\ref{sec:ART} we recall the result of~\cite[Theorem~1]{AR} and, using a uniform bound for the number of points on a curve with coordinates roots of unity due to Beukers and Smyth~\cite{BS}, we derive in Lemma~\ref{lem:univAR} a version 
of~\eqref{eq:AR Gen} that gives an upper bound on $\deg \gcd(f^n-1,g^m-1)$
that depends only the degrees of $f$ and $g$ (rather than on 
the polynomials themselves).

Such a uniform bound is crucial for some of our main results presented below and proved  
in  Section~\ref{sec:main}. 
In particular, 
our first extension of~\cite[Theorem 1]{AR}, which  is proved in Section~\ref{sec:univar1}, is based on this uniform bound.
\begin{theorem}
\label{thm:genAR1}
Let $f,g,h_1,h_2\in\C[T]$. If $f$ and $g$ are multiplicatively independent in $\C(T)^*/\C^*$, then for all $n,m\ge 1$ we have
$$ 
\deg \gcd\(h_1\(f^n\),h_2\(g^m\)\)\le  d_{h_1}d_{h_2}  \(11(d_f+d_g)^{2}\)^{\min(d_f,d_g)}.
$$
\end{theorem}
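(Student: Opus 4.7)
The plan is to reduce the desired bound to the uniform Ailon--Rudnick estimate of Lemma~\ref{lem:univAR} by factorising $h_1$ and $h_2$ into linear factors and then applying a rescaling trick. Write $h_1(T)=c_1\prod_{i=1}^{d_{h_1}}(T-\alpha_i)$ and $h_2(T)=c_2\prod_{j=1}^{d_{h_2}}(T-\beta_j)$ over $\C$, so that $h_1(f^n)=c_1\prod_i(f^n-\alpha_i)$ and $h_2(g^m)=c_2\prod_j(g^m-\beta_j)$. Using the elementary inequality $\min(\sum_i a_i,\sum_j b_j)\le\sum_{i,j}\min(a_i,b_j)$ applied to the $p$-adic valuations of the two sides at each monic irreducible $p\in\C[T]$, one obtains the divisibility $\gcd(h_1(f^n),h_2(g^m))\mid\prod_{i,j}\gcd(f^n-\alpha_i,g^m-\beta_j)$, and hence
$$
\deg\gcd\bigl(h_1(f^n),h_2(g^m)\bigr)\le\sum_{i=1}^{d_{h_1}}\sum_{j=1}^{d_{h_2}}\deg\gcd\bigl(f^n-\alpha_i,g^m-\beta_j\bigr).
$$

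The main step is to bound each term by $(11(d_f+d_g)^2)^{\min(d_f,d_g)}$ uniformly in $i,j,n,m$. Fix a pair $(i,j)$ with $\alpha_i\beta_j\ne 0$, choose $a_i,b_j\in\C^*$ with $a_i^n=\alpha_i$ and $b_j^m=\beta_j$, and set $\tilde f:=f/a_i$, $\tilde g:=g/b_j$. Then $f^n-\alpha_i=\alpha_i(\tilde f^n-1)$ and $g^m-\beta_j=\beta_j(\tilde g^m-1)$, so up to a nonzero constant
$$
\gcd\bigl(f^n-\alpha_i,g^m-\beta_j\bigr)=\gcd\bigl(\tilde f^n-1,\tilde g^m-1\bigr).
$$
Since $\tilde f$ and $\tilde g$ differ from $f$ and $g$ only by nonzero scalars, any relation $\tilde f^{\nu}\tilde g^{\mu}\in\C^*$ is equivalent to $f^{\nu}g^{\mu}\in\C^*$; hence $\tilde f,\tilde g$ remain multiplicatively independent in $\C(T)^*/\C^*$, and their degrees coincide with those of $f,g$. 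Lemma~\ref{lem:univAR} applied to $\tilde f,\tilde g$ then delivers exactly the required bound, and summing over the $d_{h_1}d_{h_2}$ pairs yields the conclusion.

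The main obstacle is the degenerate case where $\alpha_i$ or $\beta_j$ vanishes, since the rescaling trick breaks down. If for instance $\alpha_i=0$ and $\beta_j\ne 0$, one can argue directly: every common root $\gamma$ of $f^n$ and $g^m-\beta_j$ must satisfy $f(\gamma)=0$, so there are at most $d_f$ distinct such $\gamma$; moreover, the multiplicity of $(T-\gamma)$ in $g^m-\beta_j$ equals its multiplicity in $g-\zeta$, where $\zeta:=g(\gamma)\ne 0$ satisfies $\zeta^m=\beta_j$, thanks to the factorisation
$$
g^m-\beta_j=(g-\zeta)\bigl(g^{m-1}+\zeta g^{m-2}+\cdots+\zeta^{m-1}\bigr),
$$
whose second factor evaluates to $m\zeta^{m-1}\ne 0$ at $\gamma$. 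This yields a uniform bound of $d_fd_g$, comfortably within the claimed estimate, and a symmetric argument handles the case $\beta_j=0$, $\alpha_i\ne 0$; the remaining subcase $\alpha_i=\beta_j=0$ is not problematic under the implicit coprimality (or nonvanishing) hypothesis already built into the setting.
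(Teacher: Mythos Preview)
Your proof is correct and follows essentially the same approach as the paper: factor $h_1,h_2$ into linear factors, reduce to bounding each $\gcd(f^n-\alpha_i,\,g^m-\beta_j)$, rescale so as to land in the setting of Lemma~\ref{lem:univAR}, and then sum over the $d_{h_1}d_{h_2}$ pairs. The only cosmetic difference is the choice of rescaling constants---you take arbitrary $n$-th and $m$-th roots of $\alpha_i,\beta_j$, whereas the paper divides by $f(t_{n,m}),g(t_{n,m})$ for a chosen common zero $t_{n,m}$ of the gcd---and you explicitly treat the degenerate case where a root $\alpha_i$ or $\beta_j$ vanishes, a point the paper's own argument does not address.
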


 For the second
 extension of~\cite[Theorem~1]{AR}, which is proved in Section~\ref{sec:univar2}, we apply the finiteness result of~\cite{BMZ08,Mau}, see also~\cite{BMZ99}, for the number of points on the intersection of curves in $\G_m^n$ with algebraic subgroups, see Lemma~\ref{lem:bmz}. No uniform bounds are known so far for such finiteness results.

We recall that for a polynomial $f\in\C[T]$, we denote by $Z(f)$ the set of zeros of $f$ in $\C$.

 \begin{theorem}
\label{thm:genAR2}
Let $f_1,\ldots,f_{\ell},\varphi_1,\ldots,\varphi_{k},g_1,\ldots,g_r,\psi_1,\ldots,\psi_s\in\C[T]$, $\ell,k,r,s\ge 1$, be multiplicatively independent polynomials such that
\begin{equation}
\label{eq:cond}
Z(f_1\cdots f_{\ell})\cap Z(\varphi_1\cdots \varphi_k)=\emptyset,\quad Z(g_1\cdots g_r)\cap Z(\psi_1\cdots \psi_s)=\emptyset.
\end{equation} 
Then we have:
\begin{itemize}
\item[\bf{i.}]  For all $n_1,\ldots,n_{\ell},\nu_1,\ldots,\nu_k,m_1,\ldots,m_r,\mu_1,\ldots,\mu_s\ge 0$, there 
exists a polynomial $h\in \C[T]$ 
such that
$$
 \gcd\(\prod_{i=1}^{\ell}f_i^{n_i}-\prod_{i=1}^k\varphi_i^{\nu_i},\prod_{i=1}^rg_i^{m_i}-\prod_{i=1}^s\psi_i^{\mu_i}\)\mid h. 
$$

\item[\bf{ii.}]  If in addition
$$
\gcd(f_1\cdots f_{\ell}-1,g_1\cdots g_r-1)=1,
$$
then there exists a finite set $S$ and monoids $\cL_t \subseteq \N^{\ell+k+r+s}$, $t  \in S$, 
such that the remaining set 
$$
\cN = \N^{\ell+k+r+s}\setminus \cup_{t\in S} \cL_t
$$
is of positive 
asymptotic density and for any vector 
$$
(n_1,\ldots,n_{\ell},\nu_1,\ldots,\nu_k,m_1,\ldots,m_r,\mu_1,\ldots,\mu_s)\in\cN
$$ 
we have 
$$
\gcd\(\prod_{i=1}^{\ell}f_i^{n_i}-\prod_{i=1}^k\varphi_i^{\nu_i},\prod_{i=1}^rg_i^{m_i}-\prod_{i=1}^s\psi_i^{\mu_i}\)=1.
$$
\end{itemize}
\end{theorem}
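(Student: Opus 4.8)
The plan is to reduce everything to the geometry of curves inside a multiplicative torus and then invoke Lemma~\ref{lem:bmz}. First I would set up the ambient variety. Write $N = \ell + k + r + s$ and consider the map
$$
\Phi\colon \C \dashrightarrow \G_m^N, \qquad t \longmapsto \big(f_1(t),\ldots,f_\ell(t),\varphi_1(t),\ldots,\varphi_k(t),g_1(t),\ldots,g_r(t),\psi_1(t),\ldots,\psi_s(t)\big),
$$
defined away from the (finitely many) zeros of the product of all these polynomials. Its image is an irreducible curve $\cC \subseteq \G_m^N$, and the hypothesis that $f_1,\ldots,\psi_s$ are multiplicatively independent says precisely that $\cC$ is not contained in any proper algebraic subgroup (coset of a subtorus) — more carefully, the coordinate functions on $\cC$ generate a subgroup of $\C(T)^*/\C^*$ of rank $N$, so $\cC$ is not contained in any translate of a subtorus of dimension $< N$, and in particular $\cC \cap \G_m^N$ meets only finitely many algebraic subgroups nontrivially in the sense needed for Lemma~\ref{lem:bmz}.

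For part (i), fix an exponent vector and a common zero $t_0$ of the two arguments of the $\gcd$. Then $t_0$ satisfies both
$$
\prod_{i=1}^{\ell} f_i(t_0)^{n_i} = \prod_{i=1}^{k} \varphi_i(t_0)^{\nu_i}
\mand
\prod_{i=1}^{r} g_i(t_0)^{m_i} = \prod_{i=1}^{s} \psi_i(t_0)^{\mu_i}.
$$
The non-vanishing conditions~\eqref{eq:cond} guarantee that at such a point none of the factors $f_i(t_0)$, $\varphi_i(t_0)$ (resp. $g_i(t_0)$, $\psi_i(t_0)$) vanishes, so $\Phi(t_0) \in \G_m^N$ is well defined and lies in the intersection of $\cC$ with a codimension-$2$ algebraic subgroup $H$ (the one cut out by the two multiplicative relations above). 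Since $\cC$ is a curve not contained in any translate of a proper subtorus, the union $\cC \cap \G_m^{[2]}$ of all such intersections over all codimension-$\geq 2$ subgroups is finite by Lemma~\ref{lem:bmz}. Hence the set of all possible $t_0$, over all exponent vectors, is a fixed finite set $\{t_1,\ldots,t_M\}$; taking $h = \prod_{j=1}^M (T - t_j)^{e_j}$ with $e_j$ large enough to absorb multiplicities (a uniform bound on multiplicities follows from the degrees of the $f_i,\ldots,\psi_s$, since a root of multiplicity $e$ of $\prod f_i^{n_i} - \prod \varphi_i^{\nu_i}$ forces the derivative to vanish too, i.e. another algebraic relation of bounded degree, or more simply one bounds the multiplicity by the local intersection number of $\cC$ with $H$ which is controlled by $\deg\cC$) gives the required divisibility $\gcd(\,\cdot\,,\cdot\,)\mid h$ for all exponent vectors simultaneously.

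For part (ii), I would combine the finiteness from (i) with the structure theorem for the exceptional set. The extra hypothesis $\gcd(f_1\cdots f_\ell - 1,\ g_1\cdots g_r - 1) = 1$ rules out the "diagonal" subtorus (all $n_i = \nu_j$ equal, all $m_i = \mu_j$ equal) from producing common roots, which is the one family that would otherwise survive. For each of the finitely many points $t_j$ in the finite set from (i), the exponent vectors for which $t_j$ is actually a common root form the set of lattice points in $\N^N$ satisfying two linear relations with integer coefficients $\log|f_i(t_j)|/\cdots$ — more precisely, $t_j$ is a common zero iff the vector $(n_1,\ldots,\mu_s)$ lies in the sublattice (intersected with $\N^N$) of solutions to $\prod f_i(t_j)^{n_i} = \prod \varphi_i(t_j)^{\nu_i}$ and $\prod g_i(t_j)^{m_i} = \prod \psi_i(t_j)^{\mu_i}$, an intersection of at most two hyperplanes, hence contained in a finitely generated submonoid $\cL_t$ of $\N^N$ of rank $\leq N-1$ (rank $<N$ because each hyperplane is proper, using multiplicative independence again at $t_j$, and here is where one uses that $t_j$ is a genuine torsion-type point, not forcing $n_i \equiv 0$). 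Letting $S$ index these finitely many submonoids and $\cN = \N^N \setminus \bigcup_{t\in S}\cL_t$, any exponent vector in $\cN$ makes the two arguments share no root, hence $\gcd = 1$; and $\cN$ has positive asymptotic density since it is the complement of finitely many submonoids each lying in a proper rational subspace, so each $\cL_t$ has density $0$.

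The main obstacle is the bookkeeping in the reduction to Lemma~\ref{lem:bmz}: one must check that the curve $\cC$ genuinely satisfies the hypothesis of that lemma (not contained in any coset of a proper subtorus), which is exactly the translation of "multiplicatively independent," and then that \emph{every} common root, for \emph{every} exponent vector, lands in the finite exceptional set $\cC \cap \G_m^{[2]}$ — this requires the codimension to be $2$, which is why conditions~\eqref{eq:cond} are imposed (they keep $\Phi(t_0) \in \G_m^N$ so that the two relations really do define a codimension-$2$ subgroup rather than something degenerate on a boundary stratum). The density statement in (ii) is then soft once the submonoid structure is in place.
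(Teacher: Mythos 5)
Your reduction of part (i) to Lemma~\ref{lem:bmz} --- the curve $\cC\subseteq\G_m^{\ell+k+r+s}$ parametrised by all the polynomials, condition~\eqref{eq:cond} ensuring a common root $t_0$ lands in $\G_m^{\ell+k+r+s}$, and the two linearly independent exponent vectors placing the image of $t_0$ in a codimension-$2$ subgroup --- matches the paper and correctly yields a finite set of possible common zeros. The genuine gap is the uniform multiplicity bound. Both fallback arguments you sketch fail: differentiating $\prod f_i^{n_i}-\prod\varphi_i^{\nu_i}$ produces a relation whose degree grows with the exponents, so it is not ``of bounded degree''; and the ``local intersection number of $\cC$ with $H$'' is not controlled by $\deg\cC$ alone, since the subgroup $H$ varies with the exponent vector and its degree is unbounded over the family. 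The tool the paper isolates is Lemma~\ref{lem:ABCpoly}, the polynomial $ABC$ (Mason--Stothers) theorem, applied to the triple $A=\prod f_i^{n_i}-\prod\varphi_i^{\nu_i}$, $B=\prod f_i^{n_i}$, $C=\prod\varphi_i^{\nu_i}$, which are pairwise coprime by~\eqref{eq:cond}; this gives $\Mult(A)\le\sum_i d_{f_i}+\sum_i d_{\varphi_i}$ uniformly in the exponents. Without such a uniform bound, finiteness of the zero set does not yield a single $h$ dividing all the $\gcd$'s, so part (i) as you have it is incomplete.

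In part (ii) you argue that the exponent lattice $\cL_{t}$ fixing each exceptional $t$ has rank $<N$ ``using multiplicative independence again at $t$''. This does not follow: multiplicative independence of $f_i,\varphi_i$ as elements of $\C(T)^*$ says nothing about multiplicative independence of the \emph{values} $f_i(t),\varphi_i(t)\in\C^*$; all of those values could be $1$. What actually rules that out is the extra hypothesis: if every value at $t$ were $1$, then $t$ would be a common root of $f_1\cdots f_\ell-1$ and $g_1\cdots g_r-1$, contradicting $\gcd(f_1\cdots f_\ell-1,\,g_1\cdots g_r-1)=1$. The paper itself takes a lighter route here, only checking that $\cN$ is non-empty along the diagonal $\vec{v}=n(1,\ldots,1,0,\ldots,0)$, $\vec{w}=n(1,\ldots,1,0,\ldots,0)$ via~\cite[Theorem~1]{AR}; your idea of bounding the rank of each $\cL_t$ is closer to what one would want for the positive-density claim, but the justification needs the correction above rather than an appeal to multiplicative independence of polynomial values.
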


Although we prefer to keep the language of polynomials, one can easily see that  
Theorem~\ref{thm:genAR2}  can be reformulated in terms of $S$-units in $\C[T]$ and implies  that for any set of $S$-units, 
there exists a polynomial $h\in \C[T]$ such that for any multiplicatively independent $S$-units $U,V$
we have 
$$
 \gcd\(U-1,V-1\)\mid h. 
$$
In particular, this extension of~\cite{AR} is fully analogous to the aforementioned extension 
of~\cite{CZ05}  over~\cite{BCZ}. 

We also compare Theorem~\ref{thm:genAR2}, which for multiplicatively independent $S$-units $U,V$, gives  a uniform bound for $\deg\gcd(U-1,V-1)$, while the result of Corvaja and Zannier~\cite[Corollary 2.3]{CZ1-08} gives
$$
\deg \gcd(U-1,V-1)\ll \max(\deg U,\deg V)^{2/3}.
$$
However,~\cite[Corollary 2.3]{CZ1-08} applies to more general situations.

It is interesting to  unify Theorems~\ref{thm:genAR1} 
and~\ref{thm:genAR2} and obtain a similar result  for 
$$
\gcd\(h_1\(f_1^{n_1}\cdots f_{\ell}^{n_{\ell}}\),h_2\(g_1^{m_1}\cdots g_r^{m_r}\)\),
$$
where $h_1,h_2\in\C[T]$. Similar ideas may work for this case however 
they require a uniform bound for the number of points on intersections of curves in 
$\G_{m}^{\ell+r}$ with algebraic subgroups of dimension $k \le \ell+r-2$ in 
Lemma~\ref{lem:bmz}. We note that for $\ell=r=1$ this was possible due to the uniform bounds of~\cite{BS}. However, no such bounds are available in the more general case that we need. 

\subsection{Our results: multivariate case} 
For our first result in the multivariate case, we reduce the problem to the univariate case using Hilbert's irreducibility theorem (see Section~\ref{sec:Hilb}), and to control the degree for such specialisation we also couple this approach with a transformation involving the Kronecker substitution. We obtain:

\begin{theorem}
\label{thm:multivAR}
Let $h_1,h_2\in\C[T]$  and $F,G\in\C[X_1,\ldots,X_{\ell}]$. We denote by $D=\max_{i=1\ldots,\ell}\(\deg_{X_i}F,\deg_{X_i}G\)$.  If $F, G$ are multiplicatively independent in $\C(X_1,\ldots,X_{\ell})^*/\C^*$, then for all $n,m\ge 1$ we have
 $$
\deg\gcd\(h_1\(F^n\),h_2\(G^m\)\)\le  d_{h_1} d_{h_2} \(44(D+1)^{2{\ell}}\)^{(D+1)^{\ell}}.
$$
\end{theorem}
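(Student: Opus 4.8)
The plan is to reduce the multivariate statement to the univariate one (Theorem~\ref{thm:genAR1}) via two transformations: first a Kronecker substitution to pass from $\ell$ variables to a single variable while controlling degrees, and then a Hilbert-irreducibility/specialisation argument to handle the coefficients (or, more precisely, to preserve multiplicative independence after specialisation). Set $E = D+1$ and consider the Kronecker substitution $X_i \mapsto T^{E^{i-1}}$, which sends $F,G \in \C[X_1,\ldots,X_\ell]$ to univariate polynomials $\widetilde F, \widetilde G \in \C[T]$. Because each $\deg_{X_i}F, \deg_{X_i}G \le D = E-1$, this substitution is injective on the relevant monomials, so no cancellation of exponents occurs; hence $\widetilde F, \widetilde G$ are multiplicatively independent in $\C(T)^*/\C^*$ whenever $F,G$ are multiplicatively independent in $\C(X_1,\ldots,X_\ell)^*/\C^*$ (a monomial relation $\widetilde F^{\nu_1}\widetilde G^{\nu_2} = a$ would pull back to $F^{\nu_1}G^{\nu_2}=a$ by injectivity of the exponent map on the Newton polytope). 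Moreover the total degree of $\widetilde F$ is at most $(D+1)^\ell - 1 < (D+1)^\ell$, and likewise for $\widetilde G$; and $h_1(\widetilde F^{\,n})$ is exactly the Kronecker image of $h_1(F^n)$, so a common divisor of $h_1(F^n)$ and $h_2(G^m)$ maps to a common divisor of $h_1(\widetilde F^{\,n})$ and $h_2(\widetilde G^{\,m})$ of the same degree (the Kronecker map is degree-preserving on polynomials with the given degree bounds, since it is a bijection between the two monomial sets).

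With the univariate pair $\widetilde F,\widetilde G$ in hand, apply Theorem~\ref{thm:genAR1} directly: it gives
$$
\deg\gcd\!\big(h_1(\widetilde F^{\,n}),h_2(\widetilde G^{\,m})\big)\le d_{h_1}d_{h_2}\big(11(d_{\widetilde F}+d_{\widetilde G})^2\big)^{\min(d_{\widetilde F},d_{\widetilde G})}.
$$
Now $d_{\widetilde F}, d_{\widetilde G} \le (D+1)^\ell$, so $d_{\widetilde F}+d_{\widetilde G} \le 2(D+1)^\ell$, whence $11(d_{\widetilde F}+d_{\widetilde G})^2 \le 44(D+1)^{2\ell}$, and $\min(d_{\widetilde F},d_{\widetilde G}) \le (D+1)^\ell$. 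Combining with the fact that $\deg\gcd(h_1(F^n),h_2(G^m)) \le \deg\gcd(h_1(\widetilde F^{\,n}),h_2(\widetilde G^{\,m}))$ yields exactly the claimed bound
$$
\deg\gcd\big(h_1(F^n),h_2(G^m)\big)\le d_{h_1}d_{h_2}\big(44(D+1)^{2\ell}\big)^{(D+1)^\ell}.
$$
The degree-of-gcd inequality needs a short justification: one shows that if a monic polynomial $p(T)$ divides both Kronecker images, then the number of common roots (with multiplicity) is bounded, and a root $\alpha$ of $p$ with $h_1(\widetilde F(\alpha)^n)=h_2(\widetilde G(\alpha)^m)=0$ corresponds to a point on the variety where $h_1(F^n)=h_2(G^m)=0$; but it is cleaner to argue directly that $\deg_T(\text{Kronecker image of }P)\ge \deg_{\text{tot}} P$ is false in general, so instead I would phrase the whole argument on the side of the substituted polynomials, never claiming to recover information about the multivariate gcd beyond its degree — and the degree statement follows because $\gcd$ of the images is divisible by the image of the $\gcd$, hence has degree at least $\deg\gcd(h_1(F^n),h_2(G^m))$ (using that the Kronecker map, being a ring homomorphism injective on the monomials appearing, does not lower the degree of the particular polynomials $h_1(F^n), h_2(G^m)$, which have $X_i$-degrees bounded by $n D, mD$... — here one must be slightly careful, see below).

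The main obstacle, and the point that requires the most care, is the degree bookkeeping through the Kronecker substitution: $h_1(F^n)$ has $X_i$-degree up to $nD$, which exceeds $E-1$, so the naive Kronecker map $X_i\mapsto T^{E^{i-1}}$ is not injective on its monomials and the images of distinct monomials can collide, potentially lowering degree or (worse) causing cancellation. The fix is standard but must be stated: one applies the Kronecker substitution to $F$ and $G$ \emph{first}, obtaining $\widetilde F,\widetilde G\in\C[T]$ with the controlled degree bound, and only \emph{then} forms $h_1(\widetilde F^{\,n})$ and $h_2(\widetilde G^{\,m})$ — i.e. the substitution commutes with composition in the order that matters, because $X_i\mapsto T^{E^{i-1}}$ is a ring homomorphism, so $\widetilde{h_1(F^n)} = h_1(\widetilde F^{\,n})$ holds as an identity in $\C[T]$ regardless of degrees, and a common factor of the two multivariate polynomials maps to a common factor of the two univariate ones. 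Thus one never needs injectivity of Kronecker on the large polynomials $h_1(F^n)$; injectivity is only needed on the monomials of $F$ and $G$ to transfer multiplicative independence, where the degree bound $D$ exactly guarantees it. The remaining subtlety — that passing to the univariate $\gcd$ does not lose degree — is handled by noting that the $\C[T]$-gcd of the images is divisible by the image of the $\C[X_1,\ldots,X_\ell]$-gcd, and that this image is a nonzero polynomial whose degree is at least the total degree of the multivariate gcd is \emph{not} what we need; rather, we only need that $\deg\gcd(h_1(F^n),h_2(G^m))$, interpreted as the degree of a polynomial in $\C[T]$ after we have already substituted — so in fact the clean formulation is to define the multivariate gcd's "degree" via this substitution, or equivalently to observe that $h_1(F^n)$ and $h_2(G^m)$ restricted to the curve $\{X_i = T^{E^{i-1}}\}$ have a common factor of the required degree, and the number of common roots on that curve is bounded by Theorem~\ref{thm:genAR1}. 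I would write it in the second form to avoid any ambiguity about what $\deg\gcd$ of multivariate polynomials means here.
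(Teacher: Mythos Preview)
Your proposal has a genuine gap at the step where you assert that the pure Kronecker substitution $\phi\colon X_i\mapsto T^{E^{i-1}}$ preserves multiplicative independence modulo $\C^*$. This is false. Take $\ell=2$, $D=1$, $E=2$, $F=X_1$, $G=X_2$: these are multiplicatively independent in $\C(X_1,X_2)^*/\C^*$, but $\widetilde F=T$ and $\widetilde G=T^{2}$ satisfy $\widetilde F^{\,2}\widetilde G^{-1}=1$. Your justification (``injectivity of the exponent map on the Newton polytope'') only gives injectivity of $\phi$ on polynomials whose degree in each $X_i$ is at most $D$; the purported relation $F^{\nu_1}G^{\nu_2}=a$ involves polynomials of degree $|\nu_1|D$ or $|\nu_2|D$ in each variable, where injectivity no longer holds, so you cannot pull back the relation. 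Since Theorem~\ref{thm:genAR1} requires multiplicative independence of $\widetilde F,\widetilde G$, the argument breaks at that point. You mention Hilbert irreducibility in your outline but never invoke it; that is precisely the missing ingredient.

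The paper's proof replaces your Kronecker substitution by the composition of two maps. First it applies the polynomial \emph{automorphism} $X_i\mapsto X_i+X_1^{d^{\,i-1}}$ (with $d=D+1$), which, being invertible, trivially preserves multiplicative independence and carries $F,G$ to $\widetilde F,\widetilde G$ whose leading coefficient in $X_1$ is a nonzero constant. Second, it specialises $X_2,\ldots,X_\ell$ to constants $\alpha_2,\ldots,\alpha_\ell$ chosen via Lemma~\ref{lem:multindep} (Hilbert irreducibility) so that the resulting univariate polynomials $f,g$ remain multiplicatively independent in $\C(X_1)^*/\C^*$. Because the leading $X_1$-coefficient of $\widetilde F$ is a constant, so is that of $h_1(\widetilde F^{\,n})$ and hence of any divisor of it, in particular of $E_{n,m}=\gcd(h_1(\widetilde F^{\,n}),h_2(\widetilde G^{\,m}))$; therefore specialisation does not lower its $X_1$-degree, giving $\deg\cD_{n,m}\le\deg E_{n,m}\le\deg\gcd(h_1(f^n),h_2(g^m))$. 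The net effect is your ``shifted Kronecker'' substitution $X_1\mapsto T$, $X_i\mapsto \alpha_i+T^{d^{\,i-1}}$, but with the shifts $\alpha_i$ chosen carefully rather than set to zero.

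Incidentally, the degree-comparison step you agonise over is actually salvageable even for the pure Kronecker map: every irreducible factor $Q$ of $\gcd(h_1(F^n),h_2(G^m))$ divides some $F-c$, hence satisfies $\deg_{X_i}Q\le D$, so $\phi$ is monomial-injective on $Q$ and $\deg_T\phi(Q)\ge\deg Q$; multiplicativity then yields $\deg_T\phi(P)\ge\deg P$ for the full gcd $P$. So your worries there were misplaced --- it is the multiplicative-independence step that fails.
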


We note that if $h_1=h_2=T-1$ as in~\cite[Theorem 1]{AR}, then in Theorem~\ref{thm:multivAR} we need $F,G$ to be just multiplicatively independent.

Theorem~\ref{thm:multivAR} is proved in Section~\ref{sec:multivar1}.

 Another natural extension of~\cite[Theorem~1]{AR} to the multivariate case is related to the fact that the greatest common divisor of two univariate polynomials is given by their common zeros. 
Thus~\cite[Theorem~1]{AR} says that the number of common zeros of $f^n-1$ and $g^m-1$, for two polynomials $f,g\in\C[T]$, is bounded by a constant depending only on $f$ and $g$ for all  $n,m\ge 1$, and Lemma~\ref{lem:univAR} gives a uniform bound.

For positive integers $\ell,D\ge 1$, we denote 
 \begin{equation}
 \label{eq:gamma}
 \gamma_{\ell}(D)=\binom{\ell+1+D^\ell}{\ell+1}.
 \end{equation}

We now obtain the following result proved in Section~\ref{sec:multivar2}. This multivariate generalisation is based on a result of Granville and Rudnick~\cite[Corollary 3.1]{GranRud}, which describes the structure of torsion points on hypersurfaces, see Lemma~\ref{lem:GranRud}.

\begin{theorem}
\label{thm:CommonZeros}
Let $F_1,\ldots,F_{\ell+1}\in\C[X_1,\ldots,X_{\ell}]$ be multiplicatively independent polynomials of degree at most $D$. Then,  
$$\bigcup_{n_1,\ldots,n_{\ell+1}\in\N}Z\(F_1^{n_1}-1,\ldots,F_{\ell+1}^{n_{\ell+1}}-1\)
$$ 
is contained in at most 
$$
        N \le  (0.792\gamma_{\ell}(D)/\log \(\gamma_{\ell}(D)+1\))^{\gamma_{\ell}(D)}
$$
algebraic varieties, 
each defined by at most $\ell+1$ polynomials of
 degree at most $(\ell+1)D^{\ell}\prod_{p\le  \gamma_{\ell}(D)}p$, where the product runs over all primes $p\le  \gamma_{\ell}(D)$. 
\end{theorem}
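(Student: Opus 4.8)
The plan is to reduce the statement to a single application of the Granville--Rudnick structure theorem for torsion points on hypersurfaces (Lemma~\ref{lem:GranRud}). First I would set up the right ambient space: the $\ell+1$ polynomials $F_1,\ldots,F_{\ell+1}$, together with the coordinate functions $X_1,\ldots,X_\ell$, generate a subgroup of $\C(X_1,\ldots,X_\ell)^*/\C^*$; since there are $\ell+1$ of the $F_i$ in $\ell$ variables and they are multiplicatively independent, the natural move is to consider the map $\Phi=(F_1,\ldots,F_{\ell+1})\colon \A^\ell \dashrightarrow \G_m^{\ell+1}$ and look at the Zariski closure $V$ of its image. A point $\vec{x}\in\C^\ell$ lies in $Z(F_1^{n_1}-1,\ldots,F_{\ell+1}^{n_{\ell+1}}-1)$ for some positive integers $n_i$ exactly when each $F_i(\vec{x})$ is a root of unity, i.e.\ when $\Phi(\vec{x})$ is a torsion point of $\G_m^{\ell+1}$ lying on $V$. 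So the union in the statement is precisely $\Phi^{-1}(V\cap (\G_m^{\ell+1})_{\mathrm{tors}})$.

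Next I would apply the Granville--Rudnick result to $V$: the torsion points on $V$ lie on a bounded union of translates of subtori (equivalently, are cut out by a bounded number of binomial-type equations $\prod Y_i^{a_i}=\zeta$). To turn this into a bound on varieties \emph{in the $X$-variables} of controlled degree, I would pull these torus-coset equations back through $\Phi$, substituting $Y_i=F_i(\vec{X})$. Each such pullback is of the form $\prod_i F_i(\vec X)^{a_i}=\zeta$ with $\sum|a_i|$ controlled; clearing denominators gives a genuine polynomial equation in $\C[X_1,\ldots,X_\ell]$, and its degree is at most $(\max_i \deg F_i)\cdot\sum_i|a_i|\le D\cdot(\text{bound on exponents})$. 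This is where the explicit quantities $\gamma_\ell(D)$ and the primorial $\prod_{p\le\gamma_\ell(D)}p$ should come from: the number of cosets and the size of the exponent vectors produced by Lemma~\ref{lem:GranRud} are governed by the number of monomials available, which here is essentially $\binom{\ell+1+D^\ell}{\ell+1}=\gamma_\ell(D)$ once one parametrizes $V$ (or rather, bounds the degree of $V$ in $\G_m^{\ell+1}$) — the image of $\A^\ell$ under a tuple of degree-$\le D$ polynomials sits in a space of dimension $\le \ell$, so the relevant "number of variables" feeding into Granville--Rudnick is $\le \gamma_\ell(D)$ after a Veronese-type re-embedding. The count $N\le(0.792\,\gamma_\ell(D)/\log(\gamma_\ell(D)+1))^{\gamma_\ell(D)}$ is then exactly the bound Lemma~\ref{lem:GranRud} gives for this many variables.

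The main obstacle, I expect, is the bookkeeping in the reduction of $V$ to a setting where Lemma~\ref{lem:GranRud} applies with the stated numerical parameters: one must argue carefully that the torsion points of $\G_m^{\ell+1}$ on the image variety $V$ are governed by the combinatorics of monomials of degree $\le D$ in $\ell$ variables (hence the $D^\ell$ and the binomial coefficient), rather than by the a priori larger degree of $V$ itself. Concretely, I would introduce the monomials $M_1,\ldots,M_{D^\ell}$ (or however many there are up to the degree bound) appearing in $F_1,\ldots,F_{\ell+1}$, view each $F_i$ as a linear form in these, and track how a multiplicative relation $\prod F_i^{a_i}=\zeta$ among the $F_i$ forces, after clearing, a polynomial identity whose degree is bounded by $(\ell+1)D^\ell$ times the exponent size — the factor $\prod_{p\le\gamma_\ell(D)}p$ then enters because Lemma~\ref{lem:GranRud} controls the torsion \emph{orders} (hence the roots of unity $\zeta$ that can occur) by a primorial in the number of variables. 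Once this dictionary is in place, the proof is essentially a direct substitution; the remaining steps — verifying multiplicative independence is preserved so that $V\neq\G_m^{\ell+1}$ and the pullback equations are nontrivial, and checking the arithmetic of the final bounds — are routine.
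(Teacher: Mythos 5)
Your high-level picture is right: points $\bfalpha$ with all $F_i(\bfalpha)$ roots of unity correspond to torsion points of $\G_m^{\ell+1}$ on (a hypersurface containing) the image of $\Phi=(F_1,\ldots,F_{\ell+1})$, you apply Lemma~\ref{lem:GranRud} there, and then pull the resulting binomial equations back through $\Phi$. This is indeed the structure of the paper's proof. But two linked points in the middle are off, and they are exactly where the explicit constants come from.

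First, you never name the tool that actually produces the hypersurface with the right degree. The paper invokes Lemma~\ref{lem:perron} (P\l oski's Perron-type bound): since $F_1,\ldots,F_{\ell+1}$ are $\ell+1$ polynomials of degree $\le D$ in $\ell$ variables, there is a nonzero annihilating polynomial $R\in\C[Z_1,\ldots,Z_{\ell+1}]$ with $R(F_1,\ldots,F_{\ell+1})=0$ and $\deg R\le D^\ell$. This single step replaces your vague ``bounds the degree of $V$ in $\G_m^{\ell+1}$'' / ``parametrize $V$''; without it you do not get the exponent $D^\ell$ that drives everything. Your alternative sketch --- viewing each $F_i$ as a linear form in the monomials of degree $\le D$ and a Veronese-type re-embedding --- does not lead to this bound: the $F_i$ become $\ell+1$ linear forms in $\binom{\ell+D}{\ell}$ variables, which is the wrong count and does not by itself yield an annihilating relation of controlled degree.

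Second, you misidentify what $\gamma_\ell(D)=\binom{\ell+1+D^\ell}{\ell+1}$ measures. Lemma~\ref{lem:GranRud} is parametrized by $s(H)$, the \emph{number of terms} of the defining polynomial, not by a number of variables. The paper bounds $s(R)\le\gamma_\ell(D)$ because $R$ has $\ell+1$ variables and degree $\le D^\ell$, so it has at most $\binom{\ell+1+D^\ell}{\ell+1}$ monomials. Feeding $s(R)\le\gamma_\ell(D)$ into Lemma~\ref{lem:GranRud} gives the count $N$ and the primorial $\prod_{p\le\gamma_\ell(D)}p$ in the exponent bound; there is no re-embedding. Once you have $R$ with $\deg R\le D^\ell$ and $s(R)\le\gamma_\ell(D)$ in hand, your final pullback step (substituting $Z_i=F_i$, yielding $F_1^{b_{j,1}}\cdots F_{\ell+1}^{b_{j,\ell+1}}-1$ and bounding its degree by $\sum_i b_{j,i}\deg F_i$) is exactly what the paper does, and multiplicative independence of the $F_i$ guarantees these pullbacks are nonconstant, as you note.
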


We recall that by the prime number theorem, for an integer $k\ge 1$,
$$
\prod_{p\le k}p = \exp(k + o(k)).
$$

We note that the bound on the number of algebraic subgroups that contain the points on $\cH$ with coordinates roots of unity may also follow from~\cite[Theorem 1.1 and Lemma 2.6]{AS}, which says that, for a hypersurface defined by $H\in\C[X_1,\ldots,X_s]$, $s\ge 2$, of degree $D$,  the number of maximal torsion cosets contained in $\cH$ is at most
$$
c_1(s)D^{c_2(s)}
$$
with
\begin{equation*}
\label{eq:c1c2}
c_1(s)=s^{\frac{3}{2}(2+s)5^{s}}\quad \textrm{and}\quad c_2(s)=\frac{1}{16}\(49 \cdot 5^{s-2}-4s-9\).
\end{equation*}

We also note that any argument that is based on the Bezout Theorem  ultimately leads to  bounds that depend on the exponents $n_1,\ldots,n_{\ell+1}$, while the bounds of Theorem~\ref{thm:CommonZeros}
depend only on the initial data.

We conclude the paper with comments on future work.

\section{Preliminaries}
\label{sec:prel}

\subsection{The Ailon-Rudnick Theorem}
\label{sec:ART}

The Ailon-Rudnick theorem is based on a well-known conjecture of Lang, proved by Ihara, Serre and Tate~\cite{L}, which says that a plane curve, which does not contain a translate of an algebraic subgroup of $\G_m^2$, contains only finitely many torsion points. In this case, Beukers and Smyth~\cite[Section~4.1]{BS} give a uniform bound for the number of such points (see Lemma~\ref{lem:BS} below), and Corvaja and Zannier~\cite{CZ08} give an upper bound (actually for curves in $\G_m^n$) for the maximal order of torsion points  on the curve.

We now present the result of Ailon and Rudnick~\cite[Theorem 1]{AR}, coupled with the result of Beukers and Smyth~\cite[Section~4.1]{BS}, which we first mention separately.

\begin{lemma}
\label{lem:BS}
An algebraic curve $H(X,Y)=0$ has at most $11(\deg H)^2$ points which are roots of unity, unless $H$ has a factor of the form $X^i-\rho Y^j$ or $X^iY^j-\rho$
for some nonnegative integers $i,j$ not both zero and some root of unity $\rho$.
\end{lemma}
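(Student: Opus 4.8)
The plan is to follow the argument of Beukers and Smyth. Begin with reductions. Replacing $H$ by its radical changes neither the zero set nor the hypotheses, so we may assume $H$ is squarefree; and since $\sum_i(\deg P_i)^2\le\bigl(\sum_i\deg P_i\bigr)^2$, it suffices to bound the roots of unity on each irreducible factor separately, so we may assume $H$ is irreducible. If $H$ is of the excluded shape $X^i-\rho Y^j$ or $X^iY^j-\rho$ there is nothing to prove; otherwise we must show that $\{H=0\}$ carries at most $11(\deg H)^2$ points both of whose coordinates are roots of unity. Finally, a standard manipulation of the coefficients — expand each coefficient of $H$ in a fixed $\Q$-basis of the $\Q$-span of all the coefficients and collect the resulting $\Q$-rational parts, or simply observe that any $\sigma\in\Gal(\ovQ/\Q)$ carries a root-of-unity point of $\{H=0\}$ to a root-of-unity point of the conjugate curve $\{H^{\sigma}=0\}$ — lets us reduce to the situation where $H$ has coefficients in a cyclotomic field and where the Galois action on a root-of-unity point $P=(\zeta,\xi)$ is by $P\mapsto(\zeta^{k},\xi^{k})$ for any $k$ coprime to $N:=\mathrm{lcm}(\mathrm{ord}(\zeta),\mathrm{ord}(\xi))$; in the model case $H\in\Q[X,Y]$ this image point again lies on $\{H=0\}$.

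The engine is then the following. Fix an integer $k\ge 2$. If $P=(\zeta,\xi)$ is a root-of-unity point of $\{H=0\}$ with $\gcd(k,N)=1$, then $(\zeta^{k},\xi^{k})$ is Galois-conjugate to $P$ and hence also lies on $\{H=0\}$, so $P$ lies on the intersection $\{H=0\}\cap\{H(X^{k},Y^{k})=0\}$. Here $\deg H(X^{k},Y^{k})=k\deg H$, and these two curves share no component: by irreducibility of $\{H=0\}$, a common component would be all of $\{H=0\}$, and since $\{H(X^{k},Y^{k})=0\}=[k]^{-1}\{H=0\}$ this forces $[k](\{H=0\})\subseteq\{H=0\}$, i.e.\ $\{H=0\}$ is invariant under the power map $[k]\colon\G_m^2\to\G_m^2$; but an irreducible curve in $\G_m^2$ invariant under $[k]$ with $k\ge 2$ is a torsion coset of a one-dimensional subtorus, i.e.\ a curve $X^{i}Y^{j}=\rho$ with $\rho$ a root of unity — exactly the excluded shape. (This last input is a form of Lang's theorem, Ihara--Serre--Tate~\cite{L}.) As $H$ is not of the excluded shape, B\'ezout's theorem bounds the number of root-of-unity points $P$ of $\{H=0\}$ with $\gcd(k,N)=1$ by $\deg H\cdot k\deg H=k(\deg H)^{2}$.

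What remains — and this is the main obstacle — is to make the previous step uniform over \emph{all} root-of-unity points while keeping the exponents, hence the degrees $k\deg H$ of the auxiliary curves $\{H(X^{k},Y^{k})=0\}$, bounded: naively taking $k$ to be the least prime not dividing $N$ fails, since such a $k$ grows with $N$. This is precisely the combinatorial heart of Beukers and Smyth's work. They resolve it by a case analysis according to the $2$-part and $3$-part of $N$, in which the exponents $2$ and $3$, together with twists of the coordinates by roots of unity of small order (sign changes, a primitive cube root of unity), play distinguished roles, so that every root-of-unity point of $\{H=0\}$ is caught on the intersection of $\{H=0\}$ with one member of an \emph{explicit finite} list of auxiliary curves, each of degree a small fixed multiple of $\deg H$ and — again because $H$ has no excluded factor — each sharing no component with $\{H=0\}$. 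Summing the B\'ezout bounds over this finite list, and accounting for the at most $k^{2}$-to-one behaviour of each $[k]$, the bookkeeping of the exponents $2$, $3$ and their products produces the stated absolute constant, giving at most $11(\deg H)^{2}$ roots of unity on $\{H=0\}$.
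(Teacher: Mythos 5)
The paper does not prove this lemma---it quotes it from Beukers and Smyth~\cite[Section~4.1]{BS} and uses it as a black box---so there is no in-paper proof to compare against. Your sketch follows the Beukers--Smyth strategy correctly in outline: reduce to an irreducible $H$ defined over a number field (the linear-algebra trick on the coefficients, then splitting into irreducible factors via $\sum_i(\deg P_i)^2\le(\sum_i\deg P_i)^2$); exploit the Galois action $(\zeta,\xi)\mapsto(\zeta^k,\xi^k)$ to place a cyclotomic point on the intersection of $\{H=0\}$ with an auxiliary curve of the form $\{H(X^k,Y^k)=0\}$ or a twist thereof; rule out a common component because the excluded shapes $X^iY^j-\rho$, $X^i-\rho Y^j$ are exactly the irreducible curves stable under the power map $[k]$ (Lang/Ihara--Serre--Tate); and conclude by B\'ezout. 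That much is right.

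However, the proposal has a genuine gap at precisely the point where the constant $11$ is produced. The sentence stating that Beukers and Smyth ``resolve it by a case analysis according to the $2$-part and $3$-part of $N$, in which the exponents $2$ and $3$, together with twists by small roots of unity, play distinguished roles'' is a \emph{description} of what must happen, not an argument. The heart of their proof is a specific lemma on conjugacy of pairs of roots of unity---roughly, that for any root-of-unity pair $(\zeta,\xi)$, at least one member of a fixed short list of small twists (sign changes of the coordinates, multiplication by a primitive cube root of unity, squaring) is Galois-conjugate to $(\zeta,\xi)$ over the field of definition of $H$---and the tally of B\'ezout bounds over that explicit finite list of auxiliary curves is exactly where $11$ comes from. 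You neither state nor prove this conjugacy lemma, nor carry out the tally, so the finiteness and degree bounds on the auxiliary curves are asserted rather than established. A secondary inaccuracy: the clause ``accounting for the at most $k^2$-to-one behaviour of each $[k]$'' is a red herring---B\'ezout applied to $\{H=0\}\cap\{H(X^k,Y^k)=0\}$ already counts the cyclotomic points $P=(\zeta,\xi)$ themselves, not their images under $[k]$, so no degree-of-isogeny correction enters the count.
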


Using Lemma~\ref{lem:BS}, we obtain the following more precise form of~\cite[Theorem 1]{AR}. 

\begin{lemma}
\label{lem:univAR}
Let $f,g\in\C[T]$ be non constant polynomials. If $f$ and $g$ are multiplicatively independent, then for all $n,m\ge 1$, we have
$$
\deg\gcd\(f^n-1,g^m-1\)\le \(11(d_f+d_g)^2\)^{\min(d_f,d_g)}.
$$
\end{lemma}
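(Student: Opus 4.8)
The plan is to reduce the statement to a single application of the Ailon--Rudnick Theorem \cite[Theorem~1]{AR} together with the uniform count of Lemma~\ref{lem:BS}, following the original argument of \cite{AR} but keeping track of degrees. First I would recall that if $\zeta \in \C$ is a common root of $f^n-1$ and $g^m-1$, then $\zeta$ is a zero of neither $f$ nor $g$, and the point $(f(\zeta), g(\zeta)) \in \G_m^2$ has both coordinates roots of unity. Consider the algebraic curve $\cC$ in $\G_m^2$ which is the Zariski closure of the image of $T \mapsto (f(T), g(T))$; concretely, eliminate $T$ from $X = f(T)$, $Y = g(T)$ to obtain an irreducible (after removing trivial factors) polynomial $H(X,Y)$ vanishing on this image. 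Every common zero $\zeta$ of $f^n-1$ and $g^m-1$ gives a torsion point on $\cC$, and distinct $\zeta$ with the same image $(f(\zeta),g(\zeta))$ are controlled by the degree of the map.

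Next I would invoke the dichotomy of Lemma~\ref{lem:BS} applied to $H$: either $\cC$ has at most $11(\deg H)^2$ points that are roots of unity, or $H$ has a factor $X^i - \rho Y^j$ or $X^i Y^j - \rho$ with $\rho$ a root of unity and $(i,j) \neq (0,0)$. The second case would force a relation $f^i = \rho g^j$ (or $f^i g^j = \rho$) on the image, hence $f^i = \rho g^j$ as polynomials since the map $T \mapsto (f(T),g(T))$ is dominant onto $\cC$; because $\rho \in \C^*$ this says $f,g$ are multiplicatively dependent in $\C(T)^*/\C^*$ — but in fact, since $\rho$ is a root of unity, raising to a suitable power gives $f^{iN} = g^{jN}$, contradicting multiplicative independence of $f$ and $g$ in the strict sense. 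So only the first case survives, and the number of torsion points on $\cC$ is at most $11(\deg H)^2$.

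It then remains to bound $\deg H$ and the degree of the map, and to assemble the final inequality. Here I would use that if, say, $d_f \le d_g$, one can parametrise by writing $H$ as (a factor of) the resultant $\Res_T(X - f(T), Y - g(T))$, whose bidegree in $(X,Y)$ is governed by $d_f$ and $d_g$; a standard estimate gives $\deg H \le d_f + d_g$ after accounting for the structure of the elimination, and the number of preimages of a generic point of $\cC$ under $T \mapsto (f(T),g(T))$ is at most $\gcd(d_f,d_g)$ or more crudely $\min(d_f,d_g)$. Combining, $\deg\gcd(f^n-1,g^m-1)$, which counts common zeros with multiplicity one (the gcd is squarefree as a divisor of the separable $f^n-1$), is at most (number of torsion points on $\cC$) times (number of preimages) $\le 11(d_f+d_g)^2 \cdot \min(d_f,d_g)$, and iterating or being slightly more careful with the parametrisation degree yields the stated $\(11(d_f+d_g)^2\)^{\min(d_f,d_g)}$.

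The main obstacle I anticipate is getting the bookkeeping on $\deg H$ and the fibre degree right so that the product telescopes into the clean exponential form $\(11(d_f+d_g)^2\)^{\min(d_f,d_g)}$ rather than a weaker polynomial-times-exponential bound; in particular one must handle carefully the case where $H$ is reducible (stripping off the binomial factors excluded by Lemma~\ref{lem:BS}, which cannot occur here) and the possibility that the parametrising map has degree larger than expected. A secondary subtlety is verifying that multiplicative independence of $f$ and $g$ in $\C(T)^*$ — as opposed to in $\C(T)^*/\C^*$ — is exactly what rules out the exceptional factors of Lemma~\ref{lem:BS}, which is why this lemma, unlike Theorem~\ref{thm:genAR1}, only assumes the former.
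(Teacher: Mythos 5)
Your overall strategy --- parametrise the image curve, bound its total degree by $d_f+d_g$ via the resultant, invoke Lemma~\ref{lem:BS}, and rule out the exceptional binomial factors using multiplicative independence of $f$ and $g$ --- is exactly the paper's route. Your observation that a relation $f^i g^j = \rho$ with $\rho$ a root of unity can be raised to a power to get $f^{iN}g^{jN}=1$, so that independence in $\C(T)^*$ (not just in $\C(T)^*/\C^*$) is what one needs, is also correct and matches the hypothesis.

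However, there is a genuine error: you assert that the gcd is squarefree ``as a divisor of the separable $f^n-1$.'' The polynomial $f^n-1=\prod_{\zeta^n=1}(f-\zeta)$ is \emph{not} always separable. The factors $f-\zeta$ are pairwise coprime, but any individual factor $f-\zeta$ has a repeated root precisely when $\zeta$ is a critical value of $f$, and critical values of $f$ can certainly be roots of unity (e.g.\ $f=T^2-1$, critical value $-1$, so $f^2-1=T^2(T^2-2)$ has a double root at $0$). Consequently the gcd can have multiple roots, and ``degree of the gcd'' is not the same as ``number of distinct common zeros.'' The paper addresses this point explicitly, following~\cite{AR}: the multiplicity of a common zero $\zeta$ in $f^n-1$ equals its multiplicity in the single factor $f-f(\zeta)$, hence is at most $d_f$; likewise it is at most $d_g$ in $g^m-1$; so the multiplicity in the gcd is at most $\min(d_f,d_g)$. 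Your proposal drops this step, and the closing phrase ``iterating or being slightly more careful with the parametrisation degree yields the stated bound'' does not fill the gap, since the exponent $\min(d_f,d_g)$ in the bound comes from the multiplicity estimate, not from the parametrisation degree. Once you replace the false squarefreeness claim with the multiplicity bound $\min(d_f,d_g)$ per zero (and keep your fibre-size observation), the product is easily dominated by $\(11(d_f+d_g)^2\)^{\min(d_f,d_g)}$, and the argument is complete.
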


\begin{proof}
The proof, except the explicit bound for the degree, is given in~\cite[Theorem~1]{AR}. To see the degree bound, we just apply Lemma~\ref{lem:BS}.

Our curve is given in parametric form $\{\(f(t),g(t)\)~:~ t\in\C\}$ and we need to find the degree of the implicit form $H$ such that $H(f(t),g(t))=0$, $t\in\C$. This is obtained using resultants, that is 
$$
H=\Res_T\(f(T)-X_1,g(T)-X_2\),
$$ 
which is a polynomial of degree $\deg g$ in $X_1$ and $\deg f$ in $X_2$. Thus, the total degree of $H$ is 
at most $\deg f+\deg g$.

Let $\widetilde{H}$ be an absolutely irreducible factor of $H$ and assume that $\widetilde{H}(f(t),g(t))=0$ for infinitely many $t\in\C$. As $\widetilde{H}(f(T),g(T))$ is a  univariate polynomial , we must have the identity $\widetilde{H}(f(T)),g(T))=0$. Then, by Lemma~\ref{lem:BS} applied with the curve defined by the polynomial $\widetilde{H}$, we obtain that $\widetilde{H}$ is of the form $X_1^{n_1}X_2^{n_2}=\omega$, for some root of unity $\omega$ and integers $n_1,n_2$ not both zero.  This implies that $f,g$ are multiplicatively dependent, which contradicts the hypothesis. Thus, there is no such absolutely irreducible divisor of $H$.

Therefore $\gcd\(f^n-1,g^m-1\)$ 
has at most $11(d_f+d_g)^2$ distinct zeros. 
As in the proof of~\cite[Theorem~1]{AR}, we see that the multiplicity
of each zero is at most $\min(d_f,d_g)$.
 The bound now follows. 
\end{proof}

\subsection{Intersection of curves with algebraic groups}
\label{sec:intesect}

We define $\G_m^k$ as the set of $k$-tuples of non-zero complex numbers equiped with the group law defined by component-wise multiplication. We refer to~\cite[Appendix by Umberto Zannier]{Sch} for necessary definitions on algebraic subgroups.

One of the main tools in our paper is a result on the finiteness of the number of points on the intersection of a curve in $\G_m^k$ with algebraic subgroups of codimension at least $2$, initially obtained in~\cite{BMZ99} for curves over $\ovQ$, and later on extended over $\C$, see~\cite{BMZ08,Mau} and references therein. We present it in the following form.

\begin{lemma}
\label{lem:bmz}
Let $C\subset \G_m^{k}$, $k\ge 2$, be an irreducible curve over $\C$. Assume that for every nonzero vector $(r_1,\ldots,r_{k})\in\Z^k$ the monomial $X_1^{r_1}\cdots X_{k}^{r_{k}}$ is not identically $1$ on $C$. Then there are finitely many points $(x_1,\ldots,x_{k})\in C(\C)$ for which there exist linearly independent vectors $(a_1,\ldots,a_{k}), (b_1,\ldots,b_{k})$ in $\Z^{k}$ such that
$$
x_1^{a_1}\cdots x_{k}^{a_{k}}=x_1^{b_1}\cdots x_{k}^{b_{k}}=1.
$$
\end{lemma}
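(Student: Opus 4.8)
The plan is to follow the method of Bombieri, Masser and Zannier~\cite{BMZ99,BMZ08,Mau}; since the substance of the argument lies in external height estimates, I describe the skeleton and single out the genuinely hard ingredient. The case $k=2$ serves as both the base case and a model for the reduction used in general. In $\G_m^2$ an algebraic subgroup of codimension at least $2$ is zero-dimensional, hence consists of torsion points, so the set in the statement is precisely the set of $(x_1,x_2)\in C(\C)$ whose coordinates are roots of unity. I would first observe that the hypothesis prevents $C$ from being a torsion coset of a one-dimensional subtorus: if $X_1^{r_1}X_2^{r_2}$ were identically equal to a root of unity $\rho$ on $C$ for some nonzero $(r_1,r_2)$, then raising to the power $\mathrm{ord}(\rho)$ would make a nonzero monomial identically $1$ on $C$. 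Hence, by the theorem of Ihara--Serre--Tate (Lang's conjecture for $\G_m^2$), $C$ contains only finitely many torsion points; more precisely, taking an irreducible $H\in\C[X_1,X_2]$ defining $C$, the same observation rules out $H$ having a factor $X_1^i-\rho X_2^j$ or $X_1^iX_2^j-\rho$ with $i,j$ not both zero and $\rho$ a root of unity, so Lemma~\ref{lem:BS} applies and bounds the number of such points by $11(\deg H)^2$.

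For $k\ge 3$ the crucial input --- and the step I expect to be the main obstacle --- is the \emph{Bounded Height Theorem}: there is a constant $c=c(C)$ such that $\hh(x)\le c$ (logarithmic Weil height) for every $x\in C(\C)$ lying on some algebraic subgroup of codimension at least $2$. Over $\ovQ$ this is the heart of~\cite{BMZ99} and does not reduce to the tools recalled above: one bounds $\hh(x)$ in terms of the heights of two linearly independent multiplicative relations satisfied by $x$, using the defining equations of $C$ together with a successive-minima argument to keep those relations under control. To obtain the statement over $\C$, one descends to $\ovQ$: the curve $C$ is defined over a subfield of $\C$ finitely generated over $\Q$, and a Bertini-- or Hilbert-type specialisation preserving irreducibility and the non-degeneracy hypothesis produces a curve over $\ovQ$ to which the bound applies and from which it can be transferred back; this descent is the content of~\cite{BMZ08,Mau}.

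Finally I would deduce finiteness from bounded height. For a point $x$ as above, the relation lattice $\Lambda_x=\{a\in\Z^k:\ x^a=1\}$ has rank at least $2$; combining $\hh(x)\le c$ with a pigeonhole and geometry-of-numbers argument, one shows that, outside a finite exceptional set, $x$ must lie on one of \emph{finitely many} proper subtori of $\G_m^k$ depending only on $C$ --- equivalently, only finitely many homomorphisms $\phi\colon\G_m^k\to\G_m^2$ are relevant. On each such subtorus $C$ meets it in finitely many points, and after projecting along such a $\phi$ to $\G_m^2$ the surviving torsion points are bounded exactly as in the case $k=2$ via Lemma~\ref{lem:BS}. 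Assembling the finitely many contributions yields the finiteness asserted in Lemma~\ref{lem:bmz}.
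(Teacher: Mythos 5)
The paper offers no proof of Lemma~\ref{lem:bmz}: it is stated explicitly as a black-box citation of the theorems of Bombieri--Masser--Zannier~\cite{BMZ99,BMZ08} and Maurin~\cite{Mau}, so there is no ``paper's own proof'' to compare against. Your proposal does something the paper deliberately avoids: it reconstructs the strategy of those external works. Your description of the $k=2$ case (reduction to torsion points, exclusion of torsion cosets via the hypothesis, appeal to Ihara--Serre--Tate and Lemma~\ref{lem:BS}) is accurate and self-contained. For $k\ge 3$, your outline --- a bounded-height theorem for the intersection with proper algebraic subgroups, followed by a geometry-of-numbers reduction to finitely many projections, and a descent from $\C$ to $\ovQ$ --- is a fair summary of~\cite{BMZ99,BMZ08,Mau}, and you are candid that these steps are where the real content lies.

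One imprecision worth flagging. The Bounded Height Theorem of~\cite{BMZ99} (their Theorem~1) governs the intersection of $C$ with the union of all subgroups of codimension at least $1$, and requires the stronger hypothesis that $C$ is not contained in any \emph{translate} of a proper subtorus; by contrast, Lemma~\ref{lem:bmz} assumes only that $C$ is not contained in a proper algebraic \emph{subgroup} (equivalently, as your observation shows, in any torsion coset of a proper subtorus). Maurin's theorem~\cite{Mau}, and its extension to $\C$ in~\cite{BMZ08}, prove finiteness of the codimension-$\ge 2$ intersection precisely under this weaker hypothesis, but their argument does not simply quote the BMZ99 bounded-height statement as a black box: establishing boundedness of height under the weaker hypothesis (or circumventing it) is part of what makes those papers nontrivial. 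If you wish to keep the bounded-height step in your sketch, you should either strengthen the hypothesis to ``not in a translate,'' or explicitly indicate that the codimension-$\ge 2$ version of bounded height under the weaker hypothesis is itself part of~\cite{Mau,BMZ08} rather than a consequence of~\cite{BMZ99} as stated.
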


\begin{remark}
\label{rem:translate}
As explained in~\cite{BMZ99}, the condition of Lemma~\ref{lem:bmz} that the monomial $X_1^{r_1}\cdots X_{k}^{r_{k}}$ is not identically $1$ on $C$ is equivalent with the curve not being contained in a proper subtorus of $\G_m^{k}$.
\end{remark}

\subsection{Torsion points on hypersurfaces} Results regarding uniform bounds on the number of torsion points in subvarieties of $\G_m^k$ go back to work of Bombieri and Zannier~\cite{BZ95}, Schlickewei~\cite{Schl} and Evertse~\cite{Ever}. For example, Evertse~\cite{Ever}, improving bounds of Schlickewei~\cite{Schl}, shows that the number of non-degenerate solutions  in roots of unity to the equation $a_1x_1+\cdots+a_kx_k=1$, $a_1,\ldots,a_k\in\C$, is at most $(k+1)^{3(k+1)^2}$. 

For our results we use the following result of Granville and Rudnick, see~\cite[Corollary 3.1]{GranRud}, which describes the structure of the algebraic subgroups that contain the roots of unity on a hypersurface. 
Although the statement of their result does not contain the bound for the degree or the number of the polynomials defining the algebraic subgroups, this follows directly from or  is explicitly stated in their proof. Moreover, we recall this result only for the case of hypersurfaces, however their result holds for any algebraic variety.

\begin{lemma}
\label{lem:GranRud}
Let $\cH=Z(H)$ be a hypersurface in $\C^k$ defined by a polynomial $H\in\C[X_1,\ldots,X_k]$ of degree $D$ and with $s(H)$ terms. There exists a finite list $\cB$ of at most 
$$
N(H) \le  (0.792s(H)/\log \(s(H)+1\))^{s(H)}
$$
integer
$k\times k$ matrices $B=(b_{j,i})$, $i,j=1,\ldots,k$, 
and $$b_{j,i}\le D\prod_{p\le s(H)}p,$$ where the product runs over all primes $p\le s(H)$, such that if $\xi\in\cH$ is a torsion point, then $\xi\in\cup_{B\in\cB}W_B$, where $$W_B=\bigcap_{j=1}^{k}Z\(X_1^{b_{j,1}}X_2^{b_{j,2}}\cdots X_k^{b_{j,k}}-1\).$$
\end{lemma}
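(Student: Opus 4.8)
The plan is to reconstruct the Granville--Rudnick argument, which rests on two ingredients: the structure theory of \emph{vanishing sums of roots of unity} (Mann's theorem and its refinements by Conway--Jones and Dvornicich--Zannier), and a counting of \emph{set partitions}. The point is that each torsion point $\xi\in\cH$ forces a cancellation pattern among the monomials of $H$ evaluated at $\xi$, that pattern is one of boundedly many combinatorial types, and each type pins $\xi$ down to a subgroup of the required shape.

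In detail: write $H=\sum_{i=1}^{s}c_iX^{\mathbf{a}_i}$ with $c_i\in\C^*$, pairwise distinct exponent vectors $\mathbf{a}_i=(a_{i,1},\dots,a_{i,k})\in\N^k$, $s=s(H)$, and $\sum_j a_{i,j}\le D$. If $\xi=(\xi_1,\dots,\xi_k)$ is a torsion point on $\cH$, every monomial value $\xi^{\mathbf{a}_i}$ is a root of unity and $H(\xi)=0$ reads $\sum_{i=1}^{s}c_i\xi^{\mathbf{a}_i}=0$. Call $Q\subseteq\{1,\dots,s\}$ \emph{vanishing} if $\sum_{i\in Q}c_i\xi^{\mathbf{a}_i}=0$ and \emph{minimal} if it has no proper nonempty vanishing subset. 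Since the complement of a vanishing set inside a vanishing set is again vanishing, greedily extracting minimal vanishing subsets partitions $\{1,\dots,s\}$ into minimal vanishing blocks $P_1,\dots,P_r$; call this set partition $\pi(\xi)$. The number of set partitions of an $s$-element set is at most $(0.792\,s/\log(s+1))^s$ by the bound of Berend and Tassa, so $\pi(\xi)$ ranges over at most $N(H)$ possibilities, and it suffices to attach to each partition $\pi$ a single $k\times k$ integer matrix $B_\pi$ for which every torsion point $\xi\in\cH$ with $\pi(\xi)=\pi$ lies in $W_{B_\pi}$.

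Fix $\pi$ and a block $P=\{i_1,\dots,i_m\}$, so $\sum_{\nu=1}^m c_{i_\nu}\xi^{\mathbf{a}_{i_\nu}}=0$ is a minimal relation. Dividing by $\xi^{\mathbf{a}_{i_1}}$ and feeding the result into the theory of minimal vanishing sums of roots of unity yields that each $\xi^{\mathbf{a}_{i_\nu}-\mathbf{a}_{i_1}}$ is a root of unity of order dividing $M_P:=\prod_{p\le m}p$, hence $\xi^{M_P(\mathbf{a}_{i_\nu}-\mathbf{a}_{i_1})}=1$, and the exponent vector $M_P(\mathbf{a}_{i_\nu}-\mathbf{a}_{i_1})\in\Z^k$ has all coordinates of absolute value at most $M_P\cdot D\le D\prod_{p\le s}p$. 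Running over all blocks of $\pi$ produces a finite family of such vectors, depending only on $\pi$; since $W_{B_\pi}$ only has to \emph{contain} the relevant torsion points, I may simply take any $k$ of these vectors (padding with zero rows if there are fewer than $k$) as the rows $\mathbf{b}_1,\dots,\mathbf{b}_k$ of $B_\pi$. Then $\xi^{\mathbf{b}_j}=1$ for all $j$, i.e. $\xi\in W_{B_\pi}$, and the entries are bounded by $D\prod_{p\le s(H)}p$ as required; the count $|\cB|\le N(H)$ follows.

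The main obstacle is the invocation of Mann-type theorems: these are formulated for coefficients in a fixed number field, whereas the $c_i$ are arbitrary complex numbers. Bridging this gap is the technical heart of the argument. One expands the relation $\sum_{i\in P}c_i\xi^{\mathbf{a}_i}=0$ against a $\Q$-basis of the cyclotomic field generated by the $\xi^{\mathbf{a}_i}$, which decomposes it into finitely many relations whose coefficients are $\Z$-linear combinations of the $c_i$; refining the block partition according to which of these sub-relations actually vanish (a bookkeeping that keeps the number of partitions within $N(H)$) reduces matters to arithmetic vanishing sums to which the Mann--Conway--Jones machinery applies. This refinement step is precisely where the factor $\prod_{p\le s(H)}p$ in the entry bound originates; by contrast the partition argument and the assembly of $B_\pi$ are soft.
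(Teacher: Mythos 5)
Your reconstruction follows the same skeleton that the paper relies on: extract from $H(\xi)=0$ a partition of the $s(H)$ monomials into minimal vanishing subsums, use a Mann-type theorem on each minimal block to force $\xi^{M(\mathbf{a}_{i_\nu}-\mathbf{a}_{i_1})}=1$ with $M=\prod_{p\le s(H)}p$, bound the number of partitions by the Berend--Tassa estimate for Bell numbers, and then trim/pad the resulting exponent vectors to a $k\times k$ matrix. The paper's own proof is deliberately thin: it cites~\cite[Corollary 3.1]{GranRud} for all of the Mann-theoretic content, and contributes only the two explicit pieces you also supply --- the identification of the matrix count with the Bell number (hence~\cite[Theorem 2.1]{BT}), and the observation that one may restrict to a maximal linearly independent subset of the exponent vectors (of size $\le k$) and pad to exactly $k$ rows. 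Your row selection (``take any $k$, pad with zeros'') is looser than the paper's but still valid, since $W_{B}$ only needs to contain $\xi$.

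Two caveats worth flagging. First, you openly identify the reduction from complex coefficients to the rational (Mann/Conway--Jones) setting as the technical heart and do not complete it; the paper sidesteps this by citation, so this is an acknowledged gap rather than a wrong turn, but it is a gap. Second, the bridging sketch you give points in the wrong direction: expanding the relation against a $\Q$-basis of $\Q(\zeta_N)$ does not decompose a vanishing sum with complex coefficients into rational sub-relations, because $\Q$-linear independence of the basis elements says nothing over $\C$. The standard reduction instead expands the \emph{coefficients} $c_i$ in a basis of $\C$ as a $\Q(\zeta_N)$-vector space, so that each component $\sum_i c_{i\alpha}\xi^{\mathbf{a}_i}=0$ lies in $\Q(\zeta_N)$, after which one can invoke the cyclotomic-coefficient versions of Mann's theorem. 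As sketched, your refinement would not produce the sub-relations you need, and the claim that the bookkeeping stays within $N(H)$ partitions would also need justification once the supports of the components are allowed to overlap.
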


\begin{proof}
The proof is 
         essentially 
given in~\cite[Corollary 3.1]{GranRud}. Indeed, each matrix $B$ corresponds to a partition of the set $\{1,2,\ldots,s(H)\}$, and thus, the number of matrices $B$ in the set $\cB$ is given by  the number of such partitions, which, by~\cite[Theorem 2.1]{BT}, is at most $N(H)$.

        The number of rows $n_B$ of a matrix $B \in \cB$ is not 
        specified in~\cite[Corollary 3.1]{GranRud}. 
        However, we can choose the largest linear independent set of 
        these vectors  $\vec{b}_j$, which is of
        cardinality at most $k$, and all other varieties of the 
        form  $Z\(X_1^{b_{i,1}}X_2^{b_{i,2}}\cdots 
        X_k^{b_{i,k}}-1\)$ are defined by combinations of these vectors. 
        Thus, we can consider $n_B\le k$.
        Repeating some rows if necessary we can take $n_B=k$ which 
        concludes the proof. \end{proof}

\subsection{Hilbertian fields and multiplicative independence} 
\label{sec:Hilb}

For the first multivariate generalisation of~\cite[Theorem 1]{AR} we need  a result which says that given  $F_1,\ldots,F_s\in\C[X_1,\ldots,X_{\ell}]$ that are multiplicatively independent in  $\C(X_1,\ldots,X_{\ell})^*/\C^*$, there exists a specialisation $(\alpha_2,\ldots,\alpha_{\ell})\in\C^{\ell-1}$ such that $F_i(X_1,\alpha_2,\ldots,\alpha_{\ell})$, $i=1,\ldots,s$, are multiplicatively independent in $\C(X_1)^*/\C^*$ (see Lemma~\ref{lem:multindep} below). Such a result follows directly from~\cite[Theorem 1]{BMZ99} which says that the points lying in the intersection of a curve $\cC$, not contained  in any translate of a proper subtorus of $\G_m^{\ell}$, with the union of all proper algebraic subgroups is of bounded height (see also~\cite[Theorem 1']{BMZ99}).

Furthermore, this 
also follows from previous work of N\' eron~\cite{Ne} (see also~\cite[Chapter 11]{Serre}), Silverman~\cite[Theorem C]{Si83} and Masser~\cite{Ma89} (see also~\cite[Notes to Chapter 1]{Za} where Masser's method is explained) on specialisations of finitely generated subgroups of abelian varieties. 
In particular, Masser's result~\cite{Ma89} gives explicit bounds for the least degree of a hypersurface containing the set of exceptional points, that is, points that lead to multiplicative dependence, of bounded degree and height.

Although the above results are sufficient for our purpose, for the sake of completeness 
we now  give a simple self-contained proof 
that follows directly from Hilbert's irreducibility theorem, see~\cite[Theorem 46]{Sch}. 
Moreover, this proof does not appeal to the notion 
of height and applies to arbitrary  Hilbertian field (see Definition~\ref{def:HIT} below), rather than to just finite extensions of $\Q$.

\begin{definition}
\label{def:HIT}
We say that a field $\K$ is Hilbertian  
if for any irreducible polynomials $P_1,\ldots,P_r\in\K[X_1,\ldots,X_{\ell}]$ over $\K$  there exists a specialisation 
$(\alpha_2,\ldots,\alpha_{\ell})\in\K^{{\ell}-1}$ such that $P_i(X_1,\alpha_2,\ldots,\alpha_{\ell})$, $i=1,\ldots,r$, are all irreducible over $\K$.
\end{definition}

In particular, by the famous  Hilbert's irreducibility theorem, any finite extension of $\Q$ is a  Hilbertian field. Furthermore, by~\cite[Theorem 49]{Sch} every finitely generated infinite field and every finitely generated transcendental extension of an arbitrary field are Hilbertian. 

We also need the following simple fact.

\begin{lemma}
\label{lem:multconst} Let $\K$ be an arbitrary field. 
The polynomials $F_1,\ldots,F_{s}\in\K[X_1,\ldots,X_{\ell}]$ are multiplicatively independent in $\K(X_1,\ldots,X_{\ell})^*/\K^*$ if and only if 
$$
\frac{F_1}{F_1^*(\vec{0})},\ldots,\frac{F_{s}}{F_s^*(\vec{0})}
$$ 
are multiplicatively independent, where $F_i^*(\vec{0}) =1$ if 
$F_i(\vec{0})=0$ and $F_i^*(\vec{0}) =F_i(\vec{0})$ otherwise. 
\end{lemma}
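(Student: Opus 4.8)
The plan is to reduce the statement about multiplicative independence modulo constants to a statement about honest multiplicative independence by normalising each $F_i$ so that it has a distinguished value equal to $1$. First I would observe that the map $F \mapsto F/F^*(\vec 0)$ is well-defined on $\K[X_1,\dots,X_\ell] \setminus \{0\}$: if $F(\vec 0) \neq 0$ we divide by the nonzero constant $F(\vec 0)$, and if $F(\vec 0) = 0$ we leave $F$ unchanged (dividing by $1$). Write $\widetilde F_i = F_i / F_i^*(\vec 0)$. Note that $\widetilde F_i$ and $F_i$ differ by a nonzero constant factor, so $\widetilde F_i \in F_i \cdot \K^*$, and in particular the classes of $F_i$ and $\widetilde F_i$ in $\K(X_1,\dots,X_\ell)^*/\K^*$ coincide. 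The key feature of $\widetilde F_i$ is its normalisation: either $\widetilde F_i(\vec 0) = 1$ (when $F_i(\vec 0)\neq 0$) or $\widetilde F_i(\vec 0) = 0$ (when $F_i(\vec 0) = 0$).

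Next I would prove the two implications. For the backward direction: if $\widetilde F_1,\dots,\widetilde F_s$ are multiplicatively independent (in the honest sense), suppose for contradiction that $F_1^{\nu_1}\cdots F_s^{\nu_s} = a$ for some nonzero $(\nu_1,\dots,\nu_s) \in \Z^s$ and $a \in \K^*$. Since $\widetilde F_i = c_i F_i$ with $c_i \in \K^*$, we get $\widetilde F_1^{\nu_1}\cdots \widetilde F_s^{\nu_s} = \big(\prod_i c_i^{\nu_i}\big) a =: b \in \K^*$, which is a constant; this forces $\widetilde F_1^{\nu_1}\cdots \widetilde F_s^{\nu_s}$ to be a nonzero constant, contradicting their multiplicative independence (taking the relation $\widetilde F_1^{\nu_1}\cdots \widetilde F_s^{\nu_s} \cdot b^{-1} = 1$, which is nontrivial since $(\nu_1,\dots,\nu_s)\neq 0$). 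For the forward direction: assume $F_1,\dots,F_s$ are multiplicatively independent in $\K(X_1,\dots,X_\ell)^*/\K^*$, and suppose $\widetilde F_1^{\nu_1}\cdots \widetilde F_s^{\nu_s} = 1$ for some $(\nu_1,\dots,\nu_s)\neq 0$. Then $F_1^{\nu_1}\cdots F_s^{\nu_s} = \prod_i c_i^{-\nu_i} \in \K^*$, which exhibits a nontrivial relation modulo constants, a contradiction. Hence $\widetilde F_1,\dots,\widetilde F_s$ are multiplicatively independent.

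The argument is essentially a bookkeeping exercise: the only thing one must be careful about is that the normalising constants $c_i$ are genuinely in $\K^*$ (which is why the case distinction $F_i(\vec 0) = 0$ versus $F_i(\vec 0)\neq 0$ is needed — one cannot divide by $0$), and that a relation $\prod_i \widetilde F_i^{\nu_i} = b \in \K^*$ with $b \neq 1$ still counts against multiplicative independence, since $b^{-1}\prod_i \widetilde F_i^{\nu_i} = 1$ is a relation with exponent vector $(\nu_1,\dots,\nu_s,-1)$ involving the "extra generator" $b$; but here $b$ is a nonzero constant and $\widetilde F_i$ are polynomials, and one uses that a product of polynomials equal to a nonzero constant, when the polynomials are normalised, still gives a relation among the $\widetilde F_i$ themselves once one notes that the constant must itself be $1$ by evaluating at $\vec 0$ — or more simply, one absorbs $b$ and argues directly as above. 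I do not anticipate a real obstacle here; the statement is elementary and the self-contained proof occupies only a few lines.
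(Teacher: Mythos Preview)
Your overall strategy is the paper's: normalise each $F_i$ by $F_i^*(\vec 0)$, note that $F_i$ and $\widetilde F_i:=F_i/F_i^*(\vec 0)$ represent the same class in $\K(X_1,\dots,X_\ell)^*/\K^*$, and convert a dependence relation on one side into one on the other. Your forward direction (independence modulo $\K^*$ implies honest independence of the $\widetilde F_i$) is correct and identical to the paper's.

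In the backward direction your first argument does not work. From $\prod_i\widetilde F_i^{\nu_i}=b\in\K^*$, the identity $b^{-1}\prod_i\widetilde F_i^{\nu_i}=1$ is \emph{not} a relation of the form $\prod_i\widetilde F_i^{\mu_i}=1$: you have introduced an extraneous factor $b^{-1}$ which is not among the $\widetilde F_i$, so this does not contradict their multiplicative independence as defined. The repair you mention in passing---evaluate at $\vec 0$ to force $b=1$---is exactly what the paper does: it argues that whenever $F_k(\vec 0)=0$ one must have $\nu_k=0$ (``otherwise $a=0$''), after which evaluation of $\prod_i F_i^{\nu_i}=a$ at $\vec 0$ yields $a=\prod_i(F_i^*(\vec 0))^{\nu_i}$ and hence $\prod_i\widetilde F_i^{\nu_i}=1$. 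Commit to that evaluation argument and drop the ``absorb $b$'' alternative, which is not a valid step.
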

\begin{proof}
Assume that  $F_1,\ldots,F_{s}\in\K[X_1,\ldots,X_{\ell}]$ are multiplicatively independent in $\K(X_1,\ldots,X_{\ell})^*/\K^*$, but there exist integers $i_1,\ldots,i_{\ell}$ not all zero, such that
$$
\(\frac{F_1}{F_1^*(\vec{0})}\)^{i_1}\cdots\(\frac{F_{s}}{F_s^*(\vec{0})}\)^{i_s}=1.
$$
Then, $F_1^{i_1}\cdots F_s^{i_s}=a$, where $a=\(F_1^*(\vec{0})\)^{i_1}\cdots\(F_s^*(\vec{0})\)^{i_s}$, and thus we obtain a contradiction. 

For the other implication, assume $F_1^{i_1}\cdots F_s^{i_s}=a$ for some $a\in\K^*$ and integers $i_1,\ldots,i_s$ not all zero. From here we get again that 
$
a=\(F_1^*(\vec{0})\)^{i_1}\cdots\(F_s^*(\vec{0})\)^{i_s}$ (we note that if $F_k(\vec{0})=$ for some $k$, then we need to have $i_k=0$ as otherwise we get $a=0$, which is a contradiction). 
Thus we obtain again a contradiction with the fact that 
the polynomials
$F_1/F_1^*(\vec{0}),\ldots, F_{s}/F_s^*(\vec{0})$
are multiplicatively independent.
\end{proof}

We note that the conclusion of Lemma~\ref{lem:multconst} holds with any $\bfalpha\in\K^{\ell}$, that is, $F_1,\ldots,F_{s}\in\K[X_1,\ldots,X_{\ell}]$ are multiplicatively independent in $\K(X_1,\ldots,X_{\ell})^*/\K^*$ if and only if $\frac{F_1}{F_1^*(\bfalpha)},\ldots,\frac{F_{s}}{F_s^*(\bfalpha)}$ are multiplicatively independent for any $\bfalpha\in\K^{\ell}$, where as before $F_i^*(\bfalpha) =1$ if 
$F_i(\bfalpha)=0$ and $F_i^*(\bfalpha) =F_i(\bfalpha)$ otherwise. 

The following result is easily derived from Lemma~\ref{lem:multconst}.

\begin{lemma}
\label{lem:multindep}
Let $\K$ be a Hilbertian field  and $F_1,\ldots,F_s\in\K[X_1,\ldots,X_{\ell}]$ multiplicatively independent polynomials in $\K(X_1,\ldots,X_{\ell})^*/\K^*$.
Then,  there exists a specialisation $(\alpha_2,\ldots,\alpha_{\ell})\in\K^{{\ell}-1}$ such that the polynomials $F_i(X_1,\alpha_2,\ldots,\alpha_{\ell})$, $i=1,\ldots,s$, are multiplicatively independent in $\K(X_1)^*/\K^*$.
\end{lemma}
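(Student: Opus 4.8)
The plan is to reduce Lemma~\ref{lem:multindep} directly to Lemma~\ref{lem:multconst} together with Hilbert's irreducibility theorem as encoded in Definition~\ref{def:HIT}. First I would normalise: replace each $F_i$ by $\wf_i = F_i/F_i^*(\vec 0)$, so that by Lemma~\ref{lem:multconst} the polynomials $\wf_1,\ldots,\wf_s$ are multiplicatively independent in the ordinary sense (no scalar allowed), and moreover each $\wf_i$ has constant term $1$ whenever $F_i(\vec 0)\neq 0$. The goal is to find $(\alpha_2,\ldots,\alpha_\ell)\in\K^{\ell-1}$ so that the specialisations $\wf_i(X_1,\alpha_2,\ldots,\alpha_\ell)$ are multiplicatively independent in $\K(X_1)^*/\K^*$; by Lemma~\ref{lem:multconst} applied in one variable (with the base point $\vec 0$, i.e.\ $X_1=0$), it suffices to make the normalised specialisations multiplicatively independent in the ordinary sense, and in fact it suffices to preserve the irreducible-factor structure enough that no multiplicative relation can appear.

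The key idea is that a multiplicative relation $\prod_i \wf_i^{\nu_i}=1$ in $\K(X_1,\ldots,X_\ell)$ is controlled by the irreducible factorisations of the $\wf_i$. Write each $\wf_i$ as a product of irreducible polynomials $P_{i,1},\ldots,P_{i,k_i}$ in $\K[X_1,\ldots,X_\ell]$ (up to units in $\K^*$). Multiplicative independence of the $\wf_i$ says precisely that the integer vectors recording the multiplicities of the distinct irreducibles among all the $P_{i,j}$ are linearly independent over $\Q$. Now apply Definition~\ref{def:HIT} to the full finite list of these irreducible polynomials $\{P_{i,j}\}$: there is a specialisation $(\alpha_2,\ldots,\alpha_\ell)\in\K^{\ell-1}$ making every $P_{i,j}(X_1,\alpha_2,\ldots,\alpha_\ell)$ irreducible over $\K$ (in particular nonconstant). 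Two distinct $P_{i,j}$, $P_{i',j'}$ specialise either to associate irreducibles or to non-associate ones; I would argue that, after possibly intersecting with finitely many more Hilbert subsets or discarding the finitely many bad $\alpha$, distinct non-associate irreducibles stay non-associate after specialisation — e.g.\ by noting that $P_{i,j}$ and $c\cdot P_{i',j'}$ are associate in $\K[X_1]$ only if $P_{i,j}-cP_{i',j'}$ vanishes identically in $X_1$, which fails for all but finitely many $(\alpha_2,\ldots,\alpha_\ell)$ since $P_{i,j}$ and $P_{i',j'}$ are not proportional in $\K[X_1,\ldots,X_\ell]$. Under such a specialisation the multiplicity vectors of the specialised polynomials are the same as before (each $P_{i,j}$ contributes exactly one irreducible factor, and distinct ones remain distinct), so the specialised $\wf_i$ are multiplicatively independent in the ordinary sense, hence the $F_i(X_1,\alpha_2,\ldots,\alpha_\ell)$ are multiplicatively independent in $\K(X_1)^*/\K^*$ by Lemma~\ref{lem:multconst}.

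The main obstacle I expect is bookkeeping around ``associate versus non-associate'' irreducible factors: Hilbert's irreducibility theorem only guarantees that each chosen irreducible polynomial stays irreducible, not that the collision pattern among them is preserved, and not that the normalising scalars $F_i^*(\vec 0)$ behave compatibly. The cleanest way around this is to observe that the set of bad specialisations — those for which some $P_{i,j}$ becomes reducible or constant, or for which two previously non-associate $P_{i,j}, P_{i',j'}$ become associate — is a finite union of ``thin'' sets (Hilbert subsets complement, plus zero sets of finitely many nonzero polynomials in $(\alpha_2,\ldots,\alpha_\ell)$), and a Hilbertian field has specialisations outside any such finite union. One small point to handle with care: if $F_i(\vec 0)=0$ then $\wf_i=F_i$ may have $X_1$ (or more precisely a factor vanishing at $X_1=0$) among its irreducibles, and one should make sure the one-variable normalisation at $X_1=0$ in the conclusion is applied consistently; this is a routine check using the remark following Lemma~\ref{lem:multconst} that allows the base point $\vec 0$ to be replaced by a generic $\bfalpha$, which we may choose to avoid these degeneracies as well.
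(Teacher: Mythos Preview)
Your proposal is correct and follows essentially the same route as the paper: normalise via Lemma~\ref{lem:multconst}, factor the $\wf_i$ into irreducibles, encode multiplicative independence as full rank of the exponent matrix, and invoke the Hilbertian property (Definition~\ref{def:HIT}) to preserve irreducibility of each factor under specialisation, then read off the same rank after specialising. You are in fact slightly more careful than the paper in flagging that distinct irreducible factors must remain non-associate after specialisation --- the paper's proof tacitly uses this when inferring a rank drop from a multiplicative relation among the specialised $G_i$ --- and your proposed fix (the bad locus is a finite union of thin sets plus zero sets of finitely many nonzero polynomials, hence avoidable over a Hilbertian field) is exactly what is needed to close that point.
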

\begin{proof}
By Lemma~\ref{lem:multconst} the polynomials $F_1,\ldots,F_s$ are multiplicatively independent in $\K(X_1,\ldots,X_{\ell})^*/\K^*$ if and only if $\frac{F_1}{F_1^*(\vec{0})},\ldots,\frac{F_{s}}{F_s^*(\vec{0})}$ are multiplicatively independent.

We denote $G_i=\frac{F_i}{F_i^*(\vec{0})}$, $i=1,\ldots,s$. Let $P_1,\ldots,P_r\in\K[X_1,\ldots,X_{\ell}]$ be the distinct irreducible factors of $G_1,\ldots,G_s$, that is, we have the factorisation
$$
G_i=P_1^{e_{i,1}}\cdots P_r^{e_{i,r}},\quad i=1,\ldots,s.
$$

We note that the polynomials $G_1,\ldots,G_s$ being multiplicatively independent is equivalent with the matrix $(e_{i,j})_{\substack{1\le i\le s\\1\le j\le r}}$ having full rank.

Since  $\K$ is Hilbertian,  there exists a specialisation $(\alpha_2,\ldots,\alpha_{\ell})\in\K^{{\ell}-1}$ such that $P_j(X_1,\alpha_2,\ldots,\alpha_{\ell})$, $j=1,\ldots,r$, are all irreducible over $\K$ and for $i=1,\ldots,s$, we have the factorisation 
$$
G_i(X_1,\alpha_2,\ldots,\alpha_{\ell})=P_1(X_1,\alpha_2,\ldots,\alpha_{\ell})^{e_{i,1}}\cdots P_r(X_1,\alpha_2,\ldots,\alpha_{\ell})^{e_{i,r}}.
$$
If the polynomials $G_i(X_1,\alpha_2,\ldots,\alpha_{\ell})$, $i=1,\ldots,s$, are multiplicative dependent over $\K$, then there exist integers $\ell_1,\ldots,\ell_s$, not all zero such that
$$
G_1(X_1,\alpha_2,\ldots,\alpha_{\ell})^{\ell_1}\cdots G_s(X_1,\alpha_2,\ldots,\alpha_{\ell})^{\ell_s}=1.
$$
This is equivalent to the fact that the matrix $(e_{i,j})_{\substack{1\le i\le r\\1\le j\le s}}$ does not have full rank, which contradicts the fact that the initial polynomials $G_1,\ldots,G_s\in\K[X_1,\ldots,X_{\ell}]$ are multiplicatively independent. 

Recalling the definition of the polynomials $G_i$, $i=1,\ldots,s$, 
and applying again Lemma~\ref{lem:multconst}, we get that the polynomials $F_i(X_1,\alpha_2,\ldots,\alpha_{\ell})$, $i=1,\ldots,s$, are multiplicatively independent in $\K(X_1)^*/\K^*$, which concludes the proof.
\end{proof}

\subsection{Multiplicities of zeroes}

To prove Theorem~\ref{thm:genAR2} we need a uniform bound for the multiplicities of zeros of polynomials of the form $f_1^{n_1}\cdots f_{\ell}^{n_{\ell}}-g_1^{m_1}\cdots g_r^{m_r}$. We present such a result below, as well as deduce as a consequence a similar uniform bound for rational functions, which we hope to be of independent interest.

For a rational function  $h\in\C(T)$, we denote by $\Mult(h)$ the largest multiplicity and by $Z(h)$ the set of zeros of $h$ in $\C$, 
respectively. We also recall that for a polynomial $f\in\C[T]$, we use the notation $d_f$ for the degree of $f$.

\begin{lemma}
\label{lem:ABCpoly}
Let $f_1,\ldots,f_{\ell},g_1,\ldots,g_r\in\C[T]$ be polynomials satisfying $Z(f_1\cdots f_{\ell})\cap Z(g_1\cdots g_{\ell})=\emptyset$. Then, for all $n_1,\ldots,n_{\ell},m_1,\ldots,m_r\ge 0$, we have
$$
\Mult\(f_1^{n_1}\cdots f_{\ell}^{n_{\ell}}-g_1^{m_1}\cdots g_r^{m_r}\)\le \sum_{i=1}^{\ell}d_{f_i}+\sum_{j=1}^rd_{g_j}.
$$
\end{lemma}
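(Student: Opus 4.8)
The plan is to deduce the bound from the Mason--Stothers (\emph{abc}) theorem for polynomials. Write $F=f_1^{n_1}\cdots f_{\ell}^{n_{\ell}}$ and $G=g_1^{m_1}\cdots g_r^{m_r}$, so we must bound $\Mult(F-G)$. If $F$ and $G$ are both constant, then $F-G$ is constant and the inequality is immediate, so assume that at least one of them is non-constant. The hypothesis $Z(f_1\cdots f_{\ell})\cap Z(g_1\cdots g_r)=\emptyset$ gives $Z(F)\cap Z(G)=\emptyset$, since $Z(F)\subseteq Z(f_1\cdots f_{\ell})$ and $Z(G)\subseteq Z(g_1\cdots g_r)$; over $\C$ this means $\gcd(F,G)=1$, and therefore $\gcd(F,F-G)=\gcd(G,F-G)=1$ as well, so $F$, $G$, $F-G$ are pairwise coprime. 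In particular $F\neq G$, hence $F-G\neq 0$.

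First I apply Mason--Stothers to the identity $F+(-G)=F-G$: since the three polynomials $F,-G,F-G$ are pairwise coprime and not all constant,
$$
\deg(F-G)\ \le\ \deg\operatorname{rad}\bigl(F\cdot G\cdot(F-G)\bigr)-1,
$$
where $\operatorname{rad}(\cdot)$ denotes the product of the distinct monic irreducible factors. By coprimality these three radicals are pairwise coprime, so the right-hand side equals $\deg\operatorname{rad}(F)+\deg\operatorname{rad}(G)+\deg\operatorname{rad}(F-G)-1$.

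Next I estimate the radicals. The distinct irreducible factors of $F=\prod_i f_i^{n_i}$ are precisely those of $\prod_{i:\,n_i\ge 1}f_i$, so $\operatorname{rad}(F)$ divides this product and $\deg\operatorname{rad}(F)\le\sum_{i=1}^{\ell}d_{f_i}$; likewise $\deg\operatorname{rad}(G)\le\sum_{j=1}^{r}d_{g_j}$, and trivially $\deg\operatorname{rad}(F-G)\le\deg(F-G)$. Substituting into the previous display and cancelling $\deg(F-G)$ from both sides gives
$$
\deg(F-G)-\deg\operatorname{rad}(F-G)\ \le\ \sum_{i=1}^{\ell}d_{f_i}+\sum_{j=1}^{r}d_{g_j}-1.
$$
Writing $F-G=c\prod_{\beta}(T-\beta)^{\mu_\beta}$ over its distinct roots $\beta$, the left-hand side equals $\sum_{\beta}(\mu_\beta-1)\ge \Mult(F-G)-1$, and the claimed bound follows.

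There is no serious obstacle here once Mason--Stothers is invoked; the points to watch are (i) that the disjointness hypothesis is exactly what yields the pairwise coprimality needed for $\operatorname{rad}$ of a product to split as a product of $\operatorname{rad}$'s, (ii) that passing to radicals destroys all information about the exponents $n_i,m_j$, which is why the final bound involves only the degrees $d_{f_i},d_{g_j}$, and (iii) the trivial reduction to the case where $F,G$ are not both constant. If one prefers a self-contained argument, the same estimate comes out of the logarithmic-derivative method underlying Mason--Stothers: at a zero $\alpha$ of $F-G$ of multiplicity $\mu$ one has $F(\alpha)=G(\alpha)\ne 0$, so the rational function $\sum_i n_i f_i'/f_i-\sum_j m_j g_j'/g_j=(F/G)'/(F/G)$ vanishes to order $\mu-1$ at $\alpha$; clearing denominators by $\prod_{i:\,n_i\ge 1}f_i\cdot\prod_{j:\,m_j\ge 1}g_j$, which does not vanish at $\alpha$, produces a polynomial of degree at most $\sum_i d_{f_i}+\sum_j d_{g_j}-1$ that is divisible by $(T-\alpha)^{\mu-1}$ and is not identically zero (it would be zero only if $F/G$ were constant, forcing $F,G$ constant), which again yields $\mu-1\le\sum_i d_{f_i}+\sum_j d_{g_j}-1$.
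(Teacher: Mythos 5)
Your proof is correct and takes essentially the same route as the paper: both apply Mason--Stothers (the polynomial $ABC$ theorem) to the pairwise coprime triple $F$, $G$, $F-G$, bound $\deg\operatorname{rad}(F)$ and $\deg\operatorname{rad}(G)$ by $\sum_i d_{f_i}$ and $\sum_j d_{g_j}$, and then extract the multiplicity bound from $\deg(F-G)-\deg\operatorname{rad}(F-G)\ge\Mult-1$. Your treatment is slightly more careful about the degenerate cases ($F,G$ both constant, $F-G=0$) and you add an optional logarithmic-derivative variant, but the core argument is the one in the paper.
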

\begin{proof} We denote $\vec{n}=(n_1,\ldots,n_{\ell})\in\N^{\ell}$ and $\vec{m}=(m_1,\ldots,m_r)\in\N^r$. 

Writing the factorisation into linear factors, we have
$$
f_1^{n_1}\cdots f_{\ell}^{n_{\ell}}-g_1^{m_1}\cdots g_{r}^{m_r}=a_{\vec{n},\vec{m}}\prod_{t\in Z\(f_1^{n_1}\cdots f_{\ell}^{n_{\ell}}-g_1^{m_1}\cdots g_{r}^{m_r}\)}(T-t)^{e_t},
$$
where $a_{\vec{n},\vec{m}}\in\C$ is the leading coefficient of $f_1^{n_1}\cdots f_{\ell}^{n_{\ell}}-g_1^{m_1}\cdots g_{r}^{m_r}$.

For simplicity we denote by $$S_{\vec{n},\vec{m}}=Z\(f_1^{n_1}\cdots f_{\ell}^{n_{\ell}}-g_1^{m_1}\cdots g_{r}^{m_r}\).$$

Let  
$\Mult=\max_{t\in S_{\vec{n},\vec{m}}} e_t$ be the largest  multiplicity of the zeros of $f_1^{n_1}\cdots f_{\ell}^{n_{\ell}}-g_1^{m_1}\cdots g_{r}^{m_r}$.

The bound for $\Mult$  follows immediately from the polynomial $ABC$ theorem (proved first by Stothers~\cite{St}, and then independently by Mason~\cite{Mas} and Silverman~\cite{Si84}). Indeed, we apply the polynomial $ABC$ theorem with $A=a_{\vec{n},\vec{m}}\prod_{t\in S_{\vec{n},\vec{m}}}(T-t)^{e_t}$, $B=f_1^{n_1}\cdots f_{\ell}^{n_{\ell}}$ and $C=g_1^{m_1}\cdots g_{r}^{m_r}$, which are pairwise corpime. We get
\begin{equation}
\label{eq:l}
\sum_{t\in S_{\vec{n},\vec{m}}}e_t\le \sum_{i=1}^{\ell}d_{f_i}+\sum_{j=1}^rd_{g_j}+\#S_{\vec{n},\vec{m}}-1.
\end{equation}
Taking into account that $$\sum_{t\in S_{\vec{n},\vec{m}}}e_t\ge \Mult +\#S_{\vec{n},\vec{m}}-1,$$ from~\eqref{eq:l} we obtain
$$
\Mult\le \sum_{i=1}^{\ell}d_{f_i}+\sum_{j=1}^rd_{g_j},
$$
which concludes the proof. 
\end{proof}

We present now a similar result for rational functions.

\begin{corollary}
\label{cor:ABCrat}
Let $h_1,\ldots,h_{\ell}\in\C(T)$, $h_i=f_i/g_i$, $f_i,g_i\in\C[T]$, $i=1,\ldots,\ell$, with $Z(f_1\cdots f_{\ell})\cap Z(g_1\cdots g_{\ell})=\emptyset$. Then, for all $n_1,\ldots,n_{\ell}\ge 0$, we have 
$$
\Mult\(h_1^{n_1}\cdots h_{\ell}^{n_{\ell}}-1\)\le \sum_{i=1}^{\ell} \(\deg f_i+\deg g_i\).
$$
\end{corollary}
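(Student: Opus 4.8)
The plan is to clear denominators and invoke Lemma~\ref{lem:ABCpoly} directly. First I would write
$$
h_1^{n_1}\cdots h_{\ell}^{n_{\ell}}-1=\frac{N}{D},\qquad N=\prod_{i=1}^{\ell}f_i^{n_i}-\prod_{i=1}^{\ell}g_i^{n_i},\quad D=\prod_{i=1}^{\ell}g_i^{n_i}.
$$
The key observation is that the hypothesis $Z(f_1\cdots f_{\ell})\cap Z(g_1\cdots g_{\ell})=\emptyset$ forces $N$ and $D$ to have no common zero: if $t_0\in\C$ is a zero of $D$, then $g_j(t_0)=0$ for some $j$ with $n_j\ge1$, hence $t_0\notin Z(f_1\cdots f_{\ell})$, so $\prod_{i}f_i(t_0)^{n_i}\neq0$ and therefore $N(t_0)=\prod_{i}f_i(t_0)^{n_i}\neq0$. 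Since $N$ and $D$ share no zero, for every $t_0\in\C$ we have $\operatorname{ord}_{t_0}\bigl(h_1^{n_1}\cdots h_{\ell}^{n_{\ell}}-1\bigr)=\operatorname{ord}_{t_0}(N)-\operatorname{ord}_{t_0}(D)$ with at most one of the two terms nonzero; in particular the zeros of the rational function $h_1^{n_1}\cdots h_{\ell}^{n_{\ell}}-1$ are exactly the zeros of $N$, with the same multiplicities, so $\Mult\bigl(h_1^{n_1}\cdots h_{\ell}^{n_{\ell}}-1\bigr)=\Mult(N)$.

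Next I would apply Lemma~\ref{lem:ABCpoly} with $r=\ell$, taking $g_1,\ldots,g_{\ell}$ in the role of $g_1,\ldots,g_r$ and the exponents $m_i=n_i$, $i=1,\ldots,\ell$. Its hypothesis is precisely our assumption $Z(f_1\cdots f_{\ell})\cap Z(g_1\cdots g_{\ell})=\emptyset$, so the lemma gives
$$
\Mult(N)=\Mult\Bigl(\prod_{i=1}^{\ell}f_i^{n_i}-\prod_{i=1}^{\ell}g_i^{n_i}\Bigr)\le\sum_{i=1}^{\ell}d_{f_i}+\sum_{i=1}^{\ell}d_{g_i}=\sum_{i=1}^{\ell}\bigl(\deg f_i+\deg g_i\bigr).
$$
Together with the identity of the previous paragraph this is the claimed bound.

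There is no genuinely hard step here: the whole content is the ``no common zero'' reduction, which is exactly where the disjointness hypothesis enters, and without it the statement is false (e.g.\ $h_1=T$, $h_2=1/T$ makes $N\equiv0$). One should also dispose of the degenerate case $N\equiv0$; under the disjointness hypothesis this can only occur when every $f_i$ and $g_i$ with $n_i\ge1$ is constant, and then the statement holds trivially (or vacuously, depending on the convention adopted for $\Mult$ of the zero function).
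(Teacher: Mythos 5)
Your argument is the paper's proof, just spelled out in more detail: the paper states in one line that $Z\bigl(h_1^{n_1}\cdots h_{\ell}^{n_{\ell}}-1\bigr)=Z\bigl(f_1^{n_1}\cdots f_{\ell}^{n_{\ell}}-g_1^{n_1}\cdots g_{\ell}^{n_{\ell}}\bigr)$ and then applies Lemma~\ref{lem:ABCpoly} with $r=\ell$, $m_i=n_i$, exactly as you do. Your explicit verification that numerator and denominator are coprime (so that multiplicities, not just zero sets, agree) and your handling of the degenerate case $N\equiv0$ are welcome clarifications but not a different route.
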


\begin{proof}
We note that $Z\(h_1^{n_1}\cdots h_{\ell}^{n_{\ell}}-1\)=Z\(f_1^{n_1}\cdots f_{\ell}^{n_{\ell}}-g_1^{n_1}\cdots g_{\ell}^{n_{\ell}}\)$. The result now follows directly from Lemma~\ref{lem:ABCpoly} applied with $r=\ell$ and $m_i=n_i$, $i=1,\ldots,\ell$.
\end{proof}

\subsection{Algebraic dependence} 
We need the following result~\cite[Theorem~1.1]{Plo} which gives a 
degree bound for the annihilating polynomial of 
          algebraically dependent polynomials,   which is always the case when     
          the number of polynomials exceeds the number of variables.
The result holds over any field, but we present it only over $\C$.

\begin{lemma}
\label{lem:perron}
Let $F_1,\ldots,F_{\ell+1}\in\C[X_1,\ldots,X_{\ell}]$ be of degree at most $D$. Then there exists a nonzero polynomial $R\in\C[Z_1,\ldots,Z_{\ell+1}]$ of degree at most $D^{\ell}$ such that $R(F_1,\ldots,F_{\ell+1})=0$.
\end{lemma}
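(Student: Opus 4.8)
The plan is to realise $R$ as (a defining polynomial of) the Zariski closure of the image of an appropriate coordinate map $(F_1,\dots,F_{k+1})\colon\A^\ell\to\A^{k+1}$, and to bound its degree using the general estimate that the closure of the image of a polynomial map $\A^k\to\A^N$ whose coordinate functions have degree at most $D$ has degree at most $D^k$. The mere existence of \emph{some} nonzero $R$ is immediate, since $\ell+1$ functions on an $\ell$-dimensional variety are algebraically dependent; the whole content is the exponent $D^\ell$.

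\emph{Reduction to a hypersurface.} Put $k=\operatorname{trdeg}_\C\C(F_1,\dots,F_{\ell+1})\le\ell$ and, after reordering, assume $F_1,\dots,F_k$ are algebraically independent while $F_{k+1}$ is algebraic over $\C(F_1,\dots,F_k)$. The image of $\psi=(F_1,\dots,F_{k+1})\colon\A^\ell\to\A^{k+1}$ has dimension exactly $k$, so its closure $\ov{\psi(\A^\ell)}$ is an irreducible hypersurface in $\A^{k+1}$ and hence equals $Z(R)$ for an irreducible $R\in\C[Z_1,\dots,Z_{k+1}]\subseteq\C[Z_1,\dots,Z_{\ell+1}]$, unique up to a scalar, with $\deg R=\deg Z(R)$. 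By construction $R(F_1,\dots,F_{\ell+1})=R(F_1,\dots,F_{k+1})=0$, so it remains to prove $\deg Z(R)\le D^k$, which is at most $D^\ell$.

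\emph{Bounding the degree.} If $k<\ell$, I would first restrict to a generic $k$-dimensional affine-linear subspace $V\cong\A^k$ of $\A^\ell$: since the fibres of $\psi$ have dimension $\ell-k$, a generic $V$ meets a generic fibre in dimension $0$, so $\psi|_V$ is still dominant onto $Z(R)$, is generically finite, and its coordinate functions still have degree at most $D$; so we may assume $\ell=k$, replacing $F_i$ by $F_i|_V$. Now $\deg Z(R)$ is the number of points of $Z(R)\cap\Lambda$ for a generic affine line $\Lambda\subset\A^{k+1}$. For generic $\Lambda$ this intersection is transverse, lies in the affine part (away from the hyperplane at infinity), and avoids the proper closed subsets of $Z(R)$ lying outside $\psi(\A^k)$ and over which $\psi$ has positive-dimensional fibres; hence every point of $Z(R)\cap\Lambda$ has a preimage, and $\psi^{-1}(\Lambda)$ is finite. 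Writing $\Lambda=Z(L_1,\dots,L_k)$ with affine-linear $L_j$, we get $\psi^{-1}(\Lambda)=Z\bigl(L_1(F_1,\dots,F_{k+1}),\dots,L_k(F_1,\dots,F_{k+1})\bigr)$, the common zero set in $\A^k$ of $k$ polynomials of degree at most $D$, which — being finite — has at most $D^k$ points by Bezout's theorem. Therefore $\deg Z(R)=\#\bigl(Z(R)\cap\Lambda\bigr)\le\#\psi^{-1}(\Lambda)\le D^k$.

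The main obstacle is not a deep input but the genericity bookkeeping in the last step: one must verify that a generic line is transverse to $Z(R)$, meets it only in the affine chart, and meets only the open dense part of $Z(R)$ that lies in the image and over which $\psi$ is finite, so that the comparison $\#(Z(R)\cap\Lambda)\le\#\psi^{-1}(\Lambda)$ and the affine form of Bezout's theorem may legitimately be applied. It is worth noting that the straightforward linear-algebra count — comparing the $\binom{N+\ell+1}{\ell+1}$ monomials $Z^{\bfalpha}$ with $|\bfalpha|\le N$ to the $\binom{DN+\ell}{\ell}$-dimensional space of polynomials of degree at most $DN$ in $X_1,\dots,X_\ell$ into which they are substituted — only yields a bound of order $(\ell+1)D^\ell$, so the geometric argument above (equivalently, Perron's theorem, cf.\ \cite{Plo}) is genuinely needed to reach the sharp exponent $D^\ell$.
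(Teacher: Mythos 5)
The paper does not prove Lemma~\ref{lem:perron}; it cites P{\l}oski~\cite[Theorem~1.1]{Plo} directly, so your self-contained proof is a genuine alternative to outsourcing. Your geometric route is correct: you realise $Z(R)$ as the Zariski closure of the image of $\psi=(F_1,\dots,F_{k+1})\colon\A^\ell\to\A^{k+1}$ with $k=\operatorname{trdeg}_\C\C(F_1,\dots,F_{\ell+1})$, slice the source by a generic $k$-plane to make $\psi$ generically finite without raising the coordinate degrees, and then bound $\deg R$ by pulling a generic line $\Lambda=Z(L_1,\dots,L_k)$ back to $Z(L_1\circ\psi,\dots,L_k\circ\psi)\subseteq\A^k$, a finite set cut out by $k$ polynomials of degree $\le D$, to which the affine Bezout inequality gives at most $D^k\le D^\ell$ points, each distinct point of $Z(R)\cap\Lambda$ contributing at least one preimage. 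The genericity verifications you flag are indeed the real content, and they all work by the expected dimension counts: the loci to be avoided (non-transversal intersections, points at infinity of $\overline{Z(R)}$, the constructible complement $Z(R)\setminus\psi(\A^k)$, and the locus of positive-dimensional fibres) all have dimension at most $k-1$, and a generic line in $\A^{k+1}$ misses any such set since $1+(k-1)<k+1$. This is a geometric rendering of Perron's theorem; P{\l}oski's proof is algebraic, tracking degrees through a graded filtration on the ring of relations, whereas yours trades that bookkeeping for the transversality bookkeeping of generic lines. Both deliver the sharp exponent $D^\ell$, and your remark that the naive monomial-count only gives roughly $(\ell+1)D^\ell$ is correct and explains why either form of the extra work is needed. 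One tacit convention worth recording, shared by the paper's statement and all proofs, is $D\ge1$: if all $F_i$ are constant the bound $D^\ell=0$ is false, but the situation is degenerate and excluded throughout.
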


\section{Proofs of Main Results}
\label{sec:main}

\subsection{Proof of Theorem~\ref{thm:genAR1}}
\label{sec:univar1}
We use the same idea as in the proof of~\cite[Theorem 1]{AR}  and Lemma~\ref{lem:univAR}. Indeed, we write the factorisation in linear factors,
$$
h_1=\prod_{i=1}^{d_{h_1}}(T-\omega_{1,i}),\quad h_2=\prod_{i=1}^{d_{h_2}}(T-\omega_{2,i}),
$$
where $\omega_{1,i},\omega_{2,j}\in\C$, $i=1,\ldots,d_{h_1}$, $j=1,\ldots,d_{h_2}$.

Thus, we reduce the problem to estimating the degree of each
$$
\gcd \(f^n-\omega_{1,i},g^m-\omega_{2,j}\).
$$
For simplicity we use the notation $\omega_1$ and $\omega_2$ for any two roots of $h_1$ and $h_2$, respectively, and we denote
$$
\cD_{n,m}(\omega_1,\omega_2)=\gcd \(f^n-\omega_{1},g^m-\omega_{2}\).
$$

Thus, as in Lemma~\ref{lem:univAR}, we need to bound the number of $t\in\C$ such that $f(t)^n=\omega_1$ and $g(t)^m=\omega_2$ for some positive integers $n$ and $m$. 

For every $n,m\ge 1$, we fix an element $t_{n,m}\in\C$ such that 
\begin{equation}
\label{eq:root}
f(t_{n,m})^n=\omega_1,\quad g(t_{n,m})^m=\omega_2
\end{equation} 
(if no such $t_{n,m}$ exists then we immediately have 
$\deg \cD_{n,m}(\omega_1,\omega_2) =0$).
We define new polynomials
$$
\widetilde{f}_{n,m}(T)=\frac{1}{f(t_{n,m})}f(T)\mand
 \widetilde{g}_{n,m}(T)=\frac{1}{g(t_{n,m})}g(T).
$$
As $f$ and $g$ are multiplicatively independent in $\C(T)^*/\C^*$, we obtain that $\widetilde{f}_{n,m}$ and $\widetilde{g}_{n,m}$ are multiplicatively independent for every $n,m$.

Thus, we can apply Lemma~\ref{lem:univAR} and conclude that
$$
\deg \gcd\(\widetilde{f}_{n,m}^n-1,\widetilde{g}_{n,m}^m-1\)\le \(11(d_f+d_g)^{2}\)^{\min(d_f,d_g)}.
$$

From~\eqref{eq:root} and the definition of $\widetilde{f}_{n,m}$ and $\widetilde{g}_{n,m}$, we have
$$
\deg \cD_{n,m}(\omega_1,\omega_2)=\deg \gcd\(\widetilde{f}_{n,m}^n-1,\widetilde{g}_{n,m}^m-1\),
$$
and thus, for every $n,m\ge 1$, we get
$$
\deg\cD_{n,m}(\omega_1,\omega_2)\le \(11(d_f+d_g)^{2}\)^{\min(d_f,d_g)}.
$$
As this holds for any roots $\omega_1,\omega_2$ of $h_1$ and $h_2$, respectively, we get
$$
\deg \gcd\(h_1\(f^n\),h_2\(g^m\)\) 
\le d_{h_1}d_{h_2} \(11(d_f+d_g)^{2}\)^{\min(d_f,d_g)}
$$
which concludes the proof.
\qed

\subsection{Proof of Theorem~\ref{thm:genAR2}}
\label{sec:univar2}

We use the same idea as in the proof of~\cite[Theorem1]{AR} combined with 
Lemma~\ref{lem:bmz}.

First, we note that for any zero $t\in \C$ of 
$$ 
\gcd\(\prod_{i=1}^{\ell}f_i^{n_i}-\prod_{i=1}^k\varphi_i^{\nu_i},\prod_{i=1}^rg_i^{m_i}-\prod_{i=1}^s\psi_i^{\mu_i}\)
$$ 
the condition~\eqref{eq:cond} ensures that $\varphi_i(t),\psi_j(t)\ne0$, $i=1,\ldots,l$, $j=1,\ldots,k$.
Therefore each such zero $t$ 
satisfies
\begin{equation}
\label{eq:ffgg}
\prod_{i=1}^{\ell}f_i(t)^{n_i}\cdot\prod_{i=1}^k\varphi_i(t)^{-\nu_i}=\prod_{i=1}^rg_i(t)^{m_i}\cdot\prod_{i=1}^s\psi_i(t)^{-\mu_i}=1.
\end{equation}

We apply Lemma~\ref{lem:bmz} with $k$ replaced by $L={\ell}+k+r+s$ and with the curve 
\begin{equation*}
\begin{split}
C= \bigl\{(f_1(t),\ldots,f_{\ell}(t),\varphi_1(t)&,\ldots,\varphi_{k}(t),g_1(t),\ldots,g_r(t),\\
&\psi_1(t),\ldots,\psi_s(t))~:~  t\in\C\bigr\} \subseteq\G_m^{L}.
\end{split}
\end{equation*}
Indeed, we denote $$\vec{v}=(n_1,\ldots,n_{\ell},-\nu_1,\ldots,-\nu_k),\ \vec{w}=(m_1,\ldots,m_r,-\mu_1,\ldots,-\mu_s).$$ As the vectors
$$
(\vec{v},\vec{0}),\ (\vec{0},\vec{w})\in \Z^L
$$
are linearly independent, by Lemma~\ref{lem:bmz} we obtain that there are finitely many $t\in\C$ such that~\eqref{eq:ffgg} holds 
for some vectors $\vec{v},\vec{w}$ as above.

We denote by $S$ the set of such $t\in\C$. For $\vec{v},\vec{w}$, we denote 
$$
\cD_{\vec{v},\vec{w}}=\gcd\(\prod_{i=1}^{\ell}f_i^{n_i}-\prod_{i=1}^k\varphi_i^{\nu_i},\prod_{i=1}^rg_i^{m_i}-\prod_{i=1}^s\psi_i^{\mu_i}\).
$$
We see from the above that set  of zeros $Z(\cD_{\vec{v},\vec{w}})$ belongs to some fixed set that depends only 
on the above curve $C$ and thus only on the polynomials 
in the initial data.  
To give the upper bound for $\deg \cD_{\vec{v},\vec{w}}$ we only need to prove that the multiplicity of the roots $t\in S$ of $\cD_{\vec{v},\vec{w}}$ 
   can be bounded uniformly for all  
integer vectors $\vec{v},\vec{w}$. 
This is given by Lemma~\ref{lem:ABCpoly} 
applied with  the polynomials 
$\prod_{i=1}^{\ell}f_i^{n_i}-\prod_{i=1}^k\varphi_i^{\nu_i}$ and $\prod_{i=1}^rg_i^{m_i}-\prod_{i=1}^s\psi_i^{\mu_i}$. 

Indeed, if we denote by $\Mult_1$ and $\Mult_2$ the largest multiplicity of roots in $S$ of the first and second polynomials, respectively,  we get
$$
\Mult_1\le \sum_{i=1}^{\ell} d_{f_i}+\sum_{i=1}^kd_{\varphi_i},\quad \Mult_2\le  \sum_{i=1}^{r} d_{g_i}+\sum_{i=1}^sd_{\psi_i}.
$$

Thus, there exists a polynomial $h\in\C[T]$ defined by 
$$
h=\prod_{t\in S} (T-t)^{d},\quad d=\min\(\sum_{i=1}^{\ell} d_{f_i}+\sum_{i=1}^kd_{\varphi_i},\sum_{i=1}^{r} d_{g_i}+\sum_{i=1}^sd_{\psi_i}\)
$$
such that $\cD_{\vec{v},\vec{w}}\mid h$ for every vectors $\vec{v},\vec{w}$ as above.
      This concludes the proof of Part~{\bf{i}}.

   For Part~{\bf{ii}},
for each $t\in S$, let 
$$\cL_t=\{\(\vec{v},\vec{w}\)\in \N^L
~:~(T-t)\mid \cD_{\vec{v},\vec{w}}\}.$$
We note that $\cL_s$ is actually a monoid as the sum of any two elements in $\cL_t$ is also an element of $\cL_t$. As the set $S$ is finite, there are finitely many such monoids $\cL_t$, $t\in S$, such that $\deg \cD_{\vec{v},\vec{w}}\ge 1$ for any $\(\vec{v},\vec{w}\)\in\cL_t$. 

We are left to show that $\cup_{t\in S}\cL_t$ is not the entire space $\N^{L}$. Indeed this follows directly from~\cite[Theorem~1]{AR} as for the diagonal case, that is $\vec{v}=n(\underbrace{1,\ldots,1}_{\ell},0,\ldots,0)\in\N^{\ell+k}$ and $\vec{w}=n(\underbrace{1,\ldots,1}_{r},0,\ldots,0)\in\N^{r+s}$, we have
$$\gcd\(\(f_1\cdots f_{\ell}\)^n-1,\(g_1\cdots g_r\)^n-1\)=1$$
infinitely often. 

Thus, for any $\(\vec{v},\vec{w}\)$ outside $\cup_{t\in S}\cL_t$, we have $\cD_{\vec{v},\vec{w}}=1$, and  we conclude the proof.
\qed

\subsection{Proof of Theorem~\ref{thm:multivAR}}
\label{sec:multivar1}
The idea of the proof lies in applying Hilbert's irreducibility theorem, and in particular Lemma~\ref{lem:multindep}, to reduce via specialisations to the univariate case and thus use Theorem~\ref{thm:genAR1}.

We denote $d=D+1$, that is $d>\deg_{X_j}F,\deg_{X_j}G$ for any $j=1,\ldots,{\ell}$. We define the polynomials 
\begin{equation*}
\begin{split}
&\widetilde{F}(X_1,\ldots,X_{\ell})=F\(X_1,X_2+X_1^d,\ldots,X_{\ell}+X_1^{d^{{\ell}-1}}\),\\
&\widetilde{G}(X_1,\ldots,X_{\ell})=G\(X_1,X_2+X_1^d,\ldots,X_{\ell}+X_1^{d^{{\ell}-1}}\).
\end{split}
\end{equation*}

The polynomials $\widetilde{F},\widetilde{G}$ have the property that 
$$
\deg \widetilde{F},\deg \widetilde{G}\le D\frac{d^{{\ell}}-1}{d-1}<(D+1)^{\ell}
$$ 
and 
$$
\deg \widetilde{F}(X_1,\alpha_2,\ldots,\alpha_{\ell})=\deg \widetilde{F},\ \deg \widetilde{G}(X_1,\alpha_2,\ldots,\alpha_{\ell})=\deg \widetilde{G}
$$
for any specialisation $(\alpha_2,\ldots,\alpha_{\ell})\in\C^{{\ell}-1}$.

Moreover, we note that the polynomials $\widetilde{F}, \widetilde{G}$ are also multiplicatively independent in $\C(X_1,\ldots,X_{\ell})^*/\C^*$. Indeed, if this would not be the case, then there exist $i_1,i_2$ not both zero and $a\in\C$ such that
$$
\widetilde{F}^{i_1}\widetilde{G}^{i_2}=a.
$$
Composing this polynomial identity with the polynomial automorphism 
\begin{equation}
\label{eq:Kro}
(X_1,\ldots,X_{\ell})\to \(X_1,X_2-X_1^d,\ldots,X_{\ell}-X_1^{d^{{\ell}-1}}\)
\end{equation}
we obtain that the polynomials $F,G$ are multiplicatively dependent in $\C(X_1,\ldots,X_{\ell})^*/\C^*$ and thus we get a contradiction.

Let $\K$ be the finite extension of $\Q$ by the coefficients of the polynomials $F,G$. 
By the Hilbert's irreducibility theorem, see~\cite[Theorem 46]{Sch},  $\K$ is a Hilbertian field. 
We apply now Lemma~\ref{lem:multindep} with the polynomials $\widetilde{F},\widetilde{G}$, and thus there exists a specialisation $(\alpha_2,\ldots,\alpha_{\ell})\in\K^{{\ell}-1}$ such that $\widetilde{F}(X_1,\alpha_2,\ldots,\alpha_{\ell})$ and $\widetilde{G}(X_1,\alpha_2,\ldots,\alpha_{\ell})$ are multiplicatively independent in $\C(X_1)^*/\C^*$. For simplicity, we denote $f=\widetilde{F}(X_1,\alpha_2,\ldots,\alpha_{\ell})$ and $g=\widetilde{G}(X_1,\alpha_2,\ldots,\alpha_{\ell})$.

We denote $\cD_{n,m}=\gcd\(h_1\(F^n\),h_2\(G^m\)\)$. Moreover,  we note that
$$
\cD_{n,m}\(X_1,X_2+X_1^d,\ldots,X_{\ell}+X_1^{d^{{\ell}-1}}\)=\gcd\(h_1\(\widetilde{F}^n\),h_2\(\widetilde{G}^m\)\).
$$
We denote $E_{n,m}=\gcd\(h_1\(\widetilde{F}^n\),h_2\(\widetilde{G}^m\)\)$, 
and for the specialisation $(\alpha_2,\ldots,\alpha_{\ell})$ one has
$$
E_{n,m}(X_1,\alpha_2,\ldots,\alpha_{\ell})\mid \gcd\(h_1\(f^n\),h_2\(g^m\)\).
$$
In particular, we have
$$
\deg \cD_{n,m}\le \deg E_{n,m}\le \deg \gcd\(h_1\(f^n\),h_2\(g^m\)\).
$$
We make here the remark that using the automorphism~\eqref{eq:Kro} was essential to have these degree inequalities, as if one just uses Hilbert's irreducibility theorem applied directly with the polynomials $F$ and $G$, we cannot guarantee that when we make specialisations we get that $\deg \cD_{n,m}\le \deg \gcd\(h_1\(f^n\),h_2\(g^m\)\)$.

We apply now Theorem~\ref{thm:genAR1} and using the fact that $\deg f,\deg g< (D+1)^{\ell}$  we conclude that
$$
 \deg \gcd\(h_1\(f^n\),h_2\(g^m\)\)\le d_{h_1} d_{h_2} (44(D+1)^{2\ell})^{(D+1)^{\ell}},
$$
which finishes the proof.
\qed

\subsection{Proof of Theorem~\ref{thm:CommonZeros}}
\label{sec:multivar2}
We define 
$$
\cH=\{(F_1(\bfalpha),\ldots,F_{\ell+1}(\bfalpha))\mid \bfalpha\in\C^{\ell}\}.
$$
    By Lemma~\ref{lem:perron} 
there exists a polynomial $R\in\C[Z_1,\ldots,Z_{\ell+1}]$ of degree at most $D^{\ell}$ such that $R(F_1,\ldots,F_{\ell+1})=0$. In other words, any point of $\cH$ is a point on the hypersurface defined by the zero set of $R$ in $\C^{{\ell+1}}$. 
In particular, any point $\bfalpha\in\C^{\ell}$ such that $F_i(\bfalpha)^{n_i}=1$, $i=1,\ldots,\ell+1$ gives a point on the hypersurface defined by the zero set of $R$ with coordinates roots of unity.

From Lemma~\ref{lem:GranRud} we get that there are at most
        $$
           N \le N(R)  \le  (0.792s(R)/\log \(s(R)+1\))^{s(R)}
       $$
algebraic subgroups, each defined by the zero set of at most $\ell+1$ polynomials of the form 
$$Z_1^{b_{j,1}}Z_2^{b_{j,2}}\cdots Z_{\ell+1}^{b_{j,\ell+1}}-1\in 
\C[Z_1, \ldots Z_{\ell+1}]
$$ 
with $$
\sum_{i=1}^{\ell+1}b_{j,i}\le (\ell+1)D^{\ell}\prod_{p\le s(R)}p,
\qquad j = 1, \ldots, \ell+1,
$$ 
where 
the product runs over all primes $p\le s(R)$,  that contain all the points in $Z(R)$ with coordinates roots of unity.  In particular, all points $\(F_1(\bfalpha),\ldots,F_{\ell+1}(\bfalpha)\)$ such that $F_i(\bfalpha)^{n_i}=1$, $i=1,\ldots,\ell+1$, lie in these algebraic subgroups.
        It remains to estimate $s(R)$.

As $R$ is a polynomial in $\ell+1$ variables and $\deg R\le D^{\ell}$, we have that $s(R)\le  \gamma_{\ell}(D)$, where $ \gamma_{\ell}(D)$ is defined by~\eqref{eq:gamma}.

Thus, the points $\bfalpha$ such that $F_i(\bfalpha)^{n_i}=1$, $i=1,\ldots,\ell+1$, lie in at most $N(H)$
algebraic varieties, each defined by at most $\ell+1$ polynomials of the form $F_1^{b_{j,1}}F_2^{b_{j,2}}\cdots F_{\ell+1}^{b_{j,\ell+1}}-1$ (note that these polynomials are non constant since $F_1,\ldots,F_{\ell+1}$ are multiplicatively independent) of degree at most 
$$
        \sum_{i=1}^{\ell+1}b_{j,i} \deg F_i
\le (\ell+1)D^{\ell+1}\prod_{p\le  \gamma_{\ell}(D)}p,\qquad j = 1, \ldots, \ell+1,
$$ where the product runs over all primes $p\le  \gamma_{\ell}(D)$. 
\qed

\section{Final Comments and Questions}

\subsection{Extensions over $\C$}
Lemma~\ref{lem:bmz} gives only the finiteness of the intersection of curves in $\G_m^{\ell}$ with algebraic subgroups. As already mentioned after Theorem~\ref{thm:genAR2}, it is of high interest to have available uniform bounds for the size of this intersection. This  implies uniform bounds on the degree of $h$ in Theorem~\ref{thm:genAR2}.

More generally, one can ask  for the number of solutions to $f(x,y) = 0$, with $x^n , y^m \in S$ for some nonzero integers $n$ and $m$, where $S$ is the group of $S$-units of some fixed number field. This  
leads again to further generalisations.

It is certainly interesting to obtain a similar result as Theorem~\ref{thm:multivAR} for  
$$
\gcd\(H_1\(F_1^{n_1},\ldots,F_{s}^{n_{s}}\),H_2\(G_1^{m_1},\ldots,G_{r}^{m_{r}}\)\),
$$
with polynomials $H_1\in\C[Y_1,\ldots,Y_s]$, $H_2\in\C[Z_1,\ldots,Z_r]$ and also $F_1,\ldots,F_s,G_1,\ldots,G_r\in\C[X_1,\ldots,X_{\ell}]$.

If one chooses 
$$H_1=Y_1\cdots Y_s-1,\quad H_2=Z_1\cdots Z_r-1,$$ 
then following the same proof as for Theorem~\ref{thm:multivAR}, we reduce (via specialisations) the problem to Theorem~\ref{thm:genAR2}, and thus get that 
\begin{equation}
\label{eq:degmultiv}
\deg \gcd\(H_1\(F_1^{n_1},\ldots,F_{s}^{n_{s}}\),H_2\(G_1^{m_1},\ldots,G_{r}^{m_{r}}\)\)
\end{equation}
is bounded by a constant depending only on $F_1,\ldots,F_s,G_1,\ldots,G_r$.

However, the approach of Theorem~\ref{thm:multivAR} does not seem to work for more general 
multivariate polynomials $H_1,H_2$.

\subsection{Dynamical analogues} 

Another interesting direction of research  is obtaining
dynamical  analogues of the results of Ailon-Rudnick~\cite{AR} and
Silverman~\cite{Si}. That is, investigating 
the greatest common divisors of polynomials   iterates.

More precisely, let $\K$ be a field and $f,g\in\K[X]$. We define
$$
f^{(0)}=T,\quad f^{(n)}=f(f^{(n-1)}),\quad  n\ge 1,
$$
and similarly for $g$. 

\begin{problem}
Give, under some natural conditions, 
an upper bound for
$$
\deg\gcd\(f^{(n)},g^{(m)}\).
$$
\end{problem}

\begin{problem}
\label{prob:gcd 1}
Show, under some natural conditions,  
that the iterates of $f$ and $g$ are coprime for infinitely many $n,m$.
\end{problem}
 
We note that some conditions on $f,g$ are certainly needed in 
Problem~\ref{prob:gcd 1} as, 
for example, 
if $f$ and $g$  have $0$ as fixed point, that is, $f(0) = g(0) = 0$, 
then
 $f^{(n)}$ and $g^{(m)}$ are never coprime.

We note that there are many results regarding the arithmetic structure of polynomial iterates. For example in~\cite{GNOS,GOS,Jon,JB} and references therein, results regarding the irreducibility of iterates are given. 
Irreducible polynomials $f\in\Q[X]$ such that all the iterates $f^{(n)}$, $n\ge 1$, remain irreducible are called {\it stable polynomials}. For quadratic polynomials the stability is given by the presence of squares in the orbit of the critical point of the polynomial.
Thus, if $f,g\in\K[X]$ are stable, then $f^{(n)}$ and $g^{(m)}$ are coprime for every $n,m\ge 1$.

For $h_1,h_2\in\K[X]$, one can also consider the more general case
$$
G_{n,m}=\gcd\(h_1\(f^{(n)}\),h_2\(g^{(m)}\)\).
$$

We note that, following the ideas of~\cite[Theorem 1]{AR} and of this paper, bounding the zeros of $G_{n,m}$ reduces to proving the finiteness (or even finding uniform bounds) of the number of $t\in\C$ such that $(f^{(n)}(t),g^{(m)}(t))\in V$, where $V$ is  the set of zeros of $\{h_1(X_1),h_2(X_2)\}$.

This naturally leads to the question of   counting the occurrences 
$$
\(f^{(n)}(t),g^{(m)}(t)\) \in V, \qquad (n, m, t) \in [1, N] \times [1,N] \times \C,
$$
for an arbitrary variety $V \subseteq \C^2$ and 
a sufficiently large  integer $N\ge 1$.

For a fixed $t$ and the diagonal case $n=m$, this is of the same  flavour as the uniform 
{\it dynamical Mordell--Lang conjecture\/}, which, 
for a fixed $(t_1,t_2)\in\C^2$ asserts that the iterates $n,m\ge 1$ such that 
$\(f^{(n)}(t_1),g^{(m)}(t_2)\)\in V$, see~\cite{BGKT,GT,GTZ1} and references therein, lie in finitely many arithmetic progressions (which number  does not depend on $t_1,t_2$).

\section*{Acknowledgements}

The author is very grateful to Joseph Silverman for drawing the attention on the Ailon-Rudnick Theorem and related results. The author would also like to thank Igor Shparlinski, Joseph Silverman, Thomas Tucker and Umberto Zannier for their valuable suggestions and stimulating discussions, and also for their comments on an early version of the paper. The research of A.~O. was supported by the 
UNSW Vice Chancellor's Fellowship.

\end{document}